\def\CC {{\mathbb C}}     %% complex numbers
\def\PP {{\mathbb P}}     %% projective
\def\QQ {{\mathbb Q}}     %% rationals
\def\RR {{\mathbb R}}     %% real numbers
\def\ZZ {{\mathbb Z}}     %% integers
\def\ring#1{\ifmmode \mathaccent'027 #1\else \rm\accent'027 #1\fi}
\def\mc {\mathcal}
\def\Det {\mathrm{Det}}
\mathchardef\mhyphen="2D
\newtheorem{theorem}{Theorem}[section]
\newtheorem{lemma}[theorem]{Lemma}
\newtheorem{prop}[theorem]{Proposition}
\newtheorem{coro}[theorem]{Corollary}
\newtheorem{remark}[theorem]{Remark}
\theoremstyle{definition}
\newtheorem{df}[theorem]{Definition}
\theoremstyle{plain}
\newtheorem{q}[theorem]{Question}
\numberwithin{equation}{section}
\newlength{\miniwidth}
\begin{document}

\title{An extension of the Siegel space of complex abelian varieties and conjectures on stability structures}
\author{Fabian Haiden}
\maketitle

\begin{abstract}
We study semi--algebraic domains associated with symplectic tori and conjecturally identified with spaces of stability conditions on the Fukaya categories of these tori.
Our motivation is to test which results from the theory of flat surfaces could hold for more general spaces of stability conditions.
The main results concern systolic bounds and volume of the moduli space.
\end{abstract}

\newpage

\setcounter{tocdepth}{2}
\tableofcontents

\section{Introduction}

The theory of quadratic differentials on Riemann surfaces has seen surprising connections with a number of different areas of mathematics. 
From a geometric point of view they give flat surfaces with conical singular points whose cone angles are integer multiples of $\pi$.
We refer to \cite{zorich} for a broad introduction.
A particularly fundamental result, due to Masur~\cite{masur82} and Veech~\cite{veech82}, is the finiteness of the volume of the moduli space of flat surfaces of given type, which allows methods from ergodic theory to come into play.

More recently, motivated partly by considerations in string theory, moduli spaces of flat surfaces were shown to be special instances of spaces of stability conditions, as defined by Bridgeland, in works by Bridgeland--Smith~\cite{bs}, and Kontsevich, Katzarkov, and the author~\cite{hkk}.
In the latter work \cite{hkk}, a triangulated category, the \textit{partially wrapped Fukaya category} $\mc F(S)$, is defined for surfaces $S$ with markings. 
It is then shown that a flat metric on $S$ induces a stability condition on $\mc F(S)$ whose stable objects correspond to (finite length) geodesics on $S$.
In this way one obtains an embedding of the space of flat metrics on $S$, $\mc M(S)$, into the space of stability conditions, $\mathrm{Stab}(\mc F(S))$, of $\mc F(S)$ as a union of connected components of the latter.
Moreover, all of the important features of $\mc M(S)$, such as the period map and the wall--and--chamber structure, can be defined intrinsically on $\mathrm{Stab}(\mc F(S))$ starting from the axiomatics of a stability condition.

Given these results it is natural to wonder how much of the theory of flat surfaces survives for spaces of stability conditions on more general categories, e.g. Fukaya categories, $\mc F(M)$, of higher--dimensional symplectic manifolds, $M$.
See also the discussion by Smith~\cite{smith_stabsymp}.
The case when $\dim_\RR M=6$ and $M$ is compact is of particular interest in view of the theory of categorical Donaldson--Thomas invariants~\cite{ks}.
An immediate difficulty is that constructing stability conditions and determining the geometry of $\mathrm{Stab}(\mathcal C)$ is a hard problem in higher dimensions, though progress has been made~\cite{bmt, mp, bms}.

In the case when $M$ is a torus with rational constant symplectic form, there is an explicit conjectural description of $\mathrm{Stab}(\mc F(M))$ due to Kontsevich~\cite{kontsevich_lille}. 
It involves the following generalization of the notion of a compatible complex structure on a symplectic vector space, $(V,\omega)$.
Suppose $\dim_\RR V=2n$ and let $\mc U(V)$ be the space of $\CC$--valued alternating $n$-forms on $V$ which are primitive (i.e. $\Omega\wedge\omega=0$) and whose restriction to any Lagrangian subspace $L\subset V$ is non-vanishing (as a top-degree form on $L$).
It turns out that $\mc U(V)$ has two connected components $\mc U^\pm(V)$ of which $\mc U^+(V)$ contains forms which are of type $(n,0)$ with respect to some compatible complex structure on $V$.
Some first results about $\mc U(V)$ are collected in the following theorem.

\begin{theorem}
$\mc U^+(V)$ is an open semialgebraic subset of the space of primitive forms $\Lambda^n_\mathrm{pr}V^\vee$, has the homotopy type of $S^1$, and $\mathrm{Sp}(V)$ acts properly on it.
\end{theorem}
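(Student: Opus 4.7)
The Lagrangian Grassmannian $\mathrm{LGr}(V)$ is a compact real algebraic variety, so the incidence locus
\[ \mc Z = \{(\Omega, L) : \Omega|_L = 0\} \subset (\Lambda^n_{\mathrm{pr}} V^\vee \otimes \CC) \times \mathrm{LGr}(V) \]
is closed and algebraic. Its projection to $\Lambda^n_{\mathrm{pr}} V^\vee \otimes \CC$ is closed by compactness of $\mathrm{LGr}(V)$ and semialgebraic by Tarski--Seidenberg, so its complement $\mc U(V)$ is open and semialgebraic. Connected components of semialgebraic sets are again semialgebraic, hence $\mc U^+(V)$ is open and semialgebraic.

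\paragraph{Proper $\mathrm{Sp}(V)$-action.}
The plan is to construct an $\mathrm{Sp}(V)$-equivariant continuous map $\Phi : \mc U^+(V) \to P(V)$, where $P(V)$ denotes the space of positive-definite inner products on $V$. Since $\mathrm{Sp}(V)$ acts properly on $P(V)$ (stabilizers are compact intersections with orthogonal groups), this forces properness on $\mc U^+(V)$. A natural candidate, after a real normalization $c_n$ depending only on $n$, is
\[ \Phi(\Omega)(\xi,\eta) \;=\; c_n \cdot \frac{\iota_\xi \Omega \wedge \overline{\iota_\eta\Omega} \wedge \omega}{\omega^n}. \]
Positive-definiteness on $\mc U^+(V)$ should follow from the Lagrangian non-vanishing hypothesis: for $\xi \ne 0$, any Lagrangian $L \ni \xi$ satisfies $\Omega|_L \ne 0$, from which one reads $\Phi(\Omega)(\xi,\xi) > 0$ by relating $\iota_\xi \Omega|_L$ to the restriction of $\Omega$ to $L$.

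\paragraph{Homotopy type $S^1$.}
Scalar multiplication gives a free $\CC^*$-action on $\mc U^+(V)$, and the quotient map $\mc U^+(V) \to \mc Q := \mc U^+(V)/\CC^*$ is a principal $\CC^*$-bundle. It therefore suffices to show $\mc Q$ is contractible, for then $\mc U^+(V) \simeq \CC^* \simeq S^1$. I would prove contractibility by constructing an $\mathrm{Sp}(V)$-equivariant deformation retraction of $\mc Q$ onto the orbit $[\Omega_0] \cdot \mathrm{Sp}(V) \cong \mathrm{Sp}(V)/U(n)$, i.e.\ the Siegel space (contractible). Concretely, one post-composes $\Phi$ with the symplectic polar decomposition $P(V) \to \mc J^+(V)$ to obtain an equivariant map $\mu : \mc U^+(V) \to \mc J^+(V)$, and then deforms $\Omega$ to its $(n,0)$-component relative to $\mu(\Omega)$ via a path---first a rotation in $\CC^*$, then an interpolation---chosen so as to remain inside $\mc U^+(V)$.

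\paragraph{Main obstacle.}
The contractibility of $\mc Q$ is the hardest step. For $n \geq 2$, a dimension count shows $\mc U^+(V)$ is strictly larger than the $\mathrm{Sp}(V)\cdot \CC^*$-orbit of $\Omega_0$, so $\mc Q$ is not homogeneous and the retraction requires genuine geometric input. The technical heart is arranging the homotopy $t \mapsto \Omega_t$ so that $\Omega_t|_L \ne 0$ for every $t \in [0,1]$ and every Lagrangian $L$---a naive linear interpolation fails precisely when $\Omega|_L$ and $\Omega_0|_L$ have opposite complex arguments for some $L$. A convexity or phase-coherence property of $\mc U^+$, not immediately apparent from the definition, is what the argument must supply.
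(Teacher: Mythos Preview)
Your semialgebraicity argument matches the paper's exactly.

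For properness, your reduction principle is sound: a continuous $\mathrm{Sp}(V)$--equivariant map to a space with proper $\mathrm{Sp}(V)$--action pulls back properness. The gap is that you do not establish positive--definiteness of $\Phi$. Knowing that $\iota_\xi\Omega$ restricts nontrivially to every Lagrangian containing $\xi$ does not by itself control the sign of the top--degree form $\iota_\xi\Omega\wedge\overline{\iota_\xi\Omega}\wedge\omega$; Hodge--Riemann gives a sign only for primitive forms of pure type, and $\iota_\xi\Omega$ is mixed once $\Omega$ leaves $\mc U_{\mathrm{geom}}$. The paper in fact proves a closely related positivity (for the \emph{real} part, via $q_\alpha$) only when $n=3$, and that proof already requires nontrivial work; whether your $\Phi$ is positive definite for all $n$ is genuinely unclear. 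The paper sidesteps this entirely: fix one compatible $J$, consider the function $L\mapsto\log|\Omega|_L|$ on $\mathrm{LGr}(V)$, and use the Cartan decomposition to see that $\exp(X)$ with $X=\mathrm{diag}(\lambda_1,\ldots,\lambda_n,-\lambda_1,\ldots,-\lambda_n)$ shifts this function by $\sum\lambda_i$ on the coordinate Lagrangian, giving the needed bound on $X$.

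For the homotopy type, you correctly locate the difficulty but do not resolve it, and your proposed route through a \emph{varying} complex structure $\mu(\Omega)$ is more elaborate than necessary. The paper fixes a \emph{single} compatible $J$ and supplies exactly the ``phase--coherence'' ingredient you were missing: for any decomposable Lagrangian $n$--vector $w$, write $a_k=\Omega^{k,n-k}(w)$ and $p(z)=a_nz^n+\cdots+a_0$; then $\Omega(e^{i\theta}w)=e^{-in\theta}p(e^{2i\theta})$, and the condition $\Omega\in\mc U^+(V)$ forces this to wind $n$ times, hence $p$ has degree $n$ with all roots in the open unit disk. In particular $a_n\neq 0$, so $\Omega^{n,0}\neq 0$, and the explicit retraction
\[
\Omega_t=\Omega^{n,0}+t\,\Omega^{n-1,1}+\cdots+t^n\,\Omega^{0,n}
\]
satisfies $\Omega_t(w)=t^np(t^{-1})\neq 0$ for $t\in(0,1]$ since $t^{-1}$ lies outside the unit disk. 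Thus $\Omega_t\in\mc U^+(V)$ for all $t$, and $\mc U^+(V)$ deformation retracts onto $\Lambda^{n,0}V^\vee\setminus\{0\}\cong\CC^*$. This polynomial/root--location argument is the key idea your proposal lacks.
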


See Propositions~\ref{prop_semialg}, \ref{PropUVTopology}, and \ref{prop_SpActsProperly} in the main text.
We also determine the structure of $\mc U^+(\RR^2)$ and $\mc U^+(\RR^4)$ completely, see Subsection~\ref{subsec_lowdim}, and classify elements of $\mc U^+(\RR^6)$ in terms of pairs of compatible complex structures.
This is based on prior work of Hitchin on three-forms~\cite{hitchin_3forms}.
Conjecturally, the universal cover of $\mc U^+(V)$ is (perhaps a component of) the space of stability conditions on the Fukaya category of a torus $V/\Lambda$ with constant rational symplectic form.
When $\dim V=2,4$ this follows from results of Bridgeland~\cite{bridgeland07,bridgeland_k3}, and proofs of homological mirror symmetry in these cases~\cite{pz, abouzaid_smith}.

More generally, one may consider a sort of ``soft'' version of Calabi--Yau geometry given by a symplectic manifold $(M,\omega)$ with a closed complex--valued middle degree form $\Omega$ such that $\Omega_p\in\mc U^+(T_pM)$ for all $p\in M$, i.e. which does not vanish on any Lagrangian subspace of any tangent space. 
Some basic definitions in this direction are collected in Section~\ref{sec_global}.
This type of structure was suggested by Kontsevich as a geometric approach to spaces of stability conditions, in particular as a way of understanding the origin of non--geometric stability conditions.
By contrast, a geometric stability condition is one coming from Calabi--Yau geometry by the conjectural construction as described by Joyce~\cite{joyce_conj}.

In the final section we study $\mc U^+(\RR^{2n})/\mathrm{Sp}(2n,\ZZ)$, which generalizes the moduli space of flat tori $\mc U^+(\RR^{2})/\mathrm{SL}(2,\ZZ)$ and contains the moduli space of principally polarized abelian varieties.
Our first result in this direction concerns the existence of a systolic bound, i.e. an inequality of the form
\begin{equation*}
\left(\mathrm{Sys}_{\Omega}(T^{2n})\right)^2\leq C\mathrm{Vol}_\Omega(T^{2n})
\end{equation*}
where $\mathrm{Sys}_{\Omega}(T^{2n})$ is the minimum of the volumes of compact special Lagrangians in the torus and $\mathrm{Vol}_\Omega(T^{2n})$ is the volume computed by integrating the form $\Omega\wedge\overline{\Omega}$.
These quantities were considered in the context of Calabi--Yau manifolds and stability conditions by Fan--Kanazawa--Yau~\cite{fky}, and Fan \cite{fan_systole}

\begin{theorem}
Let $(V/\Lambda,\omega)$ be a rational symplectic torus, then a systolic bound holds on the ``almost geometric'' subset $\mc U_\mathrm{ag}(V)\subset\mc U^+(V)$, i.e. those forms, $\Omega$, which are complex linear up to the action of $\mathrm{GL}^+(2,\RR)$.
In particular, a systolic bound holds on all of $\mc U^+(\RR^4)$.
\end{theorem}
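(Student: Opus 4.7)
The plan is to reduce to the Calabi--Yau case by exploiting the $\mathrm{GL}^+(2,\RR)$-covariance. By definition of $\mc U_\mathrm{ag}$, any almost geometric form admits a decomposition $\Omega = A\Omega_J + B\overline{\Omega_J}$ with $\Omega_J$ the $(n,0)$-form of a compatible complex structure $J$ and $|A|^2 > |B|^2$ (the latter ensuring $\Omega\in\mc U^+$). A direct computation gives $\Omega\wedge\overline{\Omega} = (|A|^2-|B|^2)\,\Omega_J\wedge\overline{\Omega_J}$, hence $\mathrm{Vol}_\Omega = (|A|^2-|B|^2)\mathrm{Vol}_{\Omega_J}$. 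Simultaneously, for any rank-$n$ Lagrangian sublattice $\Lambda' \subset \Lambda$, the period of its associated subtorus transforms linearly: $\int_{T_{\Lambda'}}\Omega = Az_{\Lambda'} + B\bar z_{\Lambda'}$, where $z_{\Lambda'}:=\int_{T_{\Lambda'}}\Omega_J$. The crucial observation is that $Q(z):=|Az+B\bar z|^2$ is a positive definite quadratic form on $\CC\cong\RR^2$ with $\det Q = (|A|^2-|B|^2)^2$, matching the volume scaling exactly.

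In dimension $n=1$, where $\mc U_\mathrm{ag}(\RR^2)=\mc U^+(\RR^2)$ and the set of Lagrangian periods is precisely $\Lambda\subset\CC$, Minkowski's theorem applied to $(\Lambda,Q)$ immediately gives
\[
\mathrm{Sys}_\Omega^2 \;=\; \min_{v \in \Lambda\setminus\{0\}} Q(v) \;\leq\; \tfrac{4}{\pi}\sqrt{\det Q}\cdot\mathrm{covol}(\Lambda) \;=\; \tfrac{4}{\pi}\,\mathrm{Vol}_\Omega,
\]
uniformly in $(A,B)$. For $n\geq 2$ the first step is to settle the Calabi--Yau case $B=0$: there $\Omega_J$ induces a flat K\"ahler metric on the torus, and flat Lagrangian subtori have $\Omega_J$-period bounded in absolute value by their Riemannian volume. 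An iterative Minkowski argument produces vectors $v_1,\ldots,v_n \in \Lambda$ that are pairwise $\omega$-orthogonal and satisfy $|v_1|\cdots|v_n| \leq C_n\,\mathrm{Vol}(T^{2n})^{1/2}$: at step $k$ one takes $v_k$ to be a short vector in the sublattice $\Lambda \cap \bigcap_{j<k}v_j^{\perp_\omega}$ of rank $2n-k+1$, whose covolume is controlled inductively by $|v_1|\cdots|v_{k-1}|\cdot\mathrm{covol}(\Lambda)$ via the K\"ahler identity $\|\omega(v_j,\cdot)\|=|v_j|$. Hadamard's inequality then bounds $|z_{\Lambda'}| \leq |v_1|\cdots|v_n|$ for $\Lambda'=\sum\ZZ v_j$, giving the Calabi--Yau systolic bound.

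Extending from the Calabi--Yau case to all of $\mc U_\mathrm{ag}$ with a uniform constant is the main obstacle. The crude estimate $|Az+B\bar z| \leq (|A|+|B|)|z|$ combined with the Calabi--Yau bound yields only $\mathrm{Sys}_\Omega^2 \leq C_n\cdot\frac{|A|+|B|}{|A|-|B|}\,\mathrm{Vol}_\Omega$, which diverges at the boundary $|A|=|B|$ of $\mc U^+$. To overcome this I would either exploit the non-uniqueness of the decomposition $\Omega = A\Omega_J + B\overline{\Omega_J}$, choosing $J$ adaptively so as to bound $|B|/|A|$ uniformly, or directly apply Minkowski's theorem to the quadratic form $Q$ on a carefully chosen rank-$2$ sublattice of Lagrangian periods, mimicking the clean $n=1$ argument; the rational symplectic structure should guarantee a sufficient supply of Lagrangian sublattices with periods of varied phase. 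The second statement, on $\mc U^+(\RR^4)$, is then immediate from the classification of Subsection~\ref{subsec_lowdim}, according to which every form in $\mc U^+(\RR^4)$ is almost geometric.
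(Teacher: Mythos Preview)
Your proposal leaves the central step unfinished: you correctly isolate the obstacle in passing from $\mc U_\mathrm{geom}$ to $\mc U_\mathrm{ag}$, observe that the crude bound $|Az+B\bar z|\leq(|A|+|B|)|z|$ blows up as $|B|\to|A|$, and then list two possible fixes without executing either. There is also an error in the volume formula: since $\overline{\Omega_J}\wedge\Omega_J=(-1)^n\Omega_J\wedge\overline{\Omega_J}$ for an $n$-form, one has
\[
\Omega\wedge\overline{\Omega}=\bigl(|A|^2+(-1)^n|B|^2\bigr)\,\Omega_J\wedge\overline{\Omega_J},
\]
so your identity $\mathrm{Vol}_\Omega=(|A|^2-|B|^2)\mathrm{Vol}_{\Omega_J}$ and the exact match with $\det Q$ hold only when $n$ is odd.

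The paper's resolution rests on precisely this parity split. For $n$ even, after normalizing $A=1$ (so $|B|<1$) one gets $\mathrm{Vol}_\Omega=(1+|B|^2)\mathrm{Vol}_{\Omega_J}\geq\mathrm{Vol}_{\Omega_J}$ while $\mathrm{Sys}_\Omega\leq 2\,\mathrm{Sys}_{\Omega_J}$; the ``crude'' estimate you discarded already suffices. For $n$ odd, the paper follows your second suggested route, but the point is that the geometric case must be proved in a way that outputs more than a single short Lagrangian period. The paper's proof of the geometric bound (run via Siegel coordinates, not your iterated successive-minima scheme) yields a rank-two sublattice $\ZZ\oplus z_{11}\ZZ\subset\CC$ of periods of linear Lagrangian subtori, whose area $\mathrm{Im}(z_{11})$ is bounded by $C_n\mathrm{Vol}_{\Omega_J}^{1/n}$. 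For odd $n$ the volume $\mathrm{Vol}_\Omega$ is $\mathrm{SL}(2,\RR)$-invariant, and the $\mathrm{SL}(2,\RR)$-action on $\Omega$ transforms this planar lattice area-preservingly; Minkowski in the plane then bounds $\mathrm{Sys}_\Omega$ uniformly. So the missing idea is the even/odd dichotomy, and the missing ingredient is that the geometric-case argument should be set up to hand you a rank-two period lattice with controlled area, not merely one short vector.
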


See Theorem~\ref{thm_torus_sys_ag} in the main text.
Our second result in the final section of the text (Theorem~\ref{thm_vol_inf}) shows that the volume of $\mc U^+_1(\RR^6)/\mathrm{Sp}(6,\ZZ)$ (with respect to its natural $\mathrm{SL}(2,\RR)$--invariant volume form) is infinite, in contrast to the case of $\mc U_1^+(\RR^2)/\mathrm{SL}(2,\ZZ)$.
Here, $\mc U^+_1(V)\subset\mc U^+(V)$ is the subset of forms which are suitably normalized to unit volume.

\begin{theorem}
$\mathrm{Vol}\left(\mc U_1^+(\RR^6)/\mathrm{Sp}(6,\ZZ)\right)=\infty$
\end{theorem}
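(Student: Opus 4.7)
The core strategy is to exploit the fact that $\mc U^+(\RR^6)$ is strictly larger than the $\mathrm{Sp}(6,\RR)$-homogeneous classical Calabi--Yau locus: one has $\dim_\RR \mc U^+(\RR^6)=28$ while $\dim_\RR \mathrm{Sp}(6,\RR)=21$, so generic $\mathrm{Sp}(6,\RR)$-orbits have codimension at least $7$ inside $\mc U^+(\RR^6)$. The plan is to produce a non-compact slice in $\mc U_1^+(\RR^6)/\mathrm{Sp}(6,\ZZ)$ transverse to the classical locus on which the $\mathrm{SL}(2,\RR)$-invariant volume form integrates to infinity.

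Using the classification of $\mc U^+(\RR^6)$ from Subsection~\ref{subsec_lowdim} (based on Hitchin's theory of stable $3$-forms), I would describe each normalized $\Omega$, up to the $\mathrm{SL}(2,\RR)$-action, by a pair of compatible almost complex structures $(J_1,J_2)$ on $(\RR^6,\omega)$, where the classical locus corresponds to $J_1=J_2$. Fixing a basepoint $[J_0]$ in the finite-volume Siegel quotient $\mc A_3=\mathrm{Sp}(6,\RR)/(\mathrm{U}(3)\cdot\mathrm{Sp}(6,\ZZ))$, I would consider the fiber of the projection $[\Omega]\mapsto [J_1]$ from $\mc U_1^+(\RR^6)/\mathrm{Sp}(6,\ZZ)$ to $\mc A_3$. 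This fiber is the space of $J_2$'s compatible with $\omega$, modulo the finite group $\mathrm{Stab}_{\mathrm{Sp}(6,\ZZ)}(J_0)$; since the $\mathrm{Sp}(6,\ZZ)$-action on $\mc H_3=\mathrm{Sp}(6,\RR)/\mathrm{U}(3)$ is proper with finite stabilizers, this stabilizer is indeed finite.

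The key observation is that the fiber, being essentially an open subset of $\mc H_3$ modulo a finite group, inherits the infinite $\mathrm{Sp}(6,\RR)$-invariant volume of $\mc H_3$. Provided that the pushforward of the $\mathrm{SL}(2,\RR)$-invariant volume form on $\mc U_1^+(\RR^6)$ to the fiber is comparable (bounded below on an open cusp neighborhood) to the $\mathrm{Sp}(6,\RR)$-invariant volume form on $\mc H_3$, the fiber has infinite volume and Fubini over $\mc A_3$ concludes the proof.

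The main obstacle is this last comparison: verifying that the restriction of the volume form on $\mc U_1^+(\RR^6)$ to a fiber has non-integrable density. This requires writing the $\mathrm{SL}(2,\RR)$-invariant volume form explicitly in Hitchin-type $(J_1,J_2)$ coordinates and checking that it does not vanish too quickly along a cusp direction $J_2\to\partial\mc H_3$. A more robust alternative, avoiding intricate coordinate computations, is to exhibit an explicit $1$-parameter family $\Omega_t\in\mc U_1^+(\RR^6)$ with $t\to\infty$ of pairwise $\mathrm{Sp}(6,\ZZ)$-inequivalent forms, together with a transverse open tubular neighborhood on which the volume density is uniformly bounded below; integrating then forces divergence. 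Natural candidates for $\Omega_t$ come from diagonal actions of one-parameter subgroups of $\mathrm{Sp}(V_1,\RR)\subset\mathrm{Sp}(6,\RR)$ associated with a rational symplectic splitting $\RR^6=V_1\oplus V_2$ with $\dim V_1=2$, applied to a generic non-classical form.
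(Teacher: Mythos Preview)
Your proposal has a genuine gap: the volume-form comparison you flag as ``the main obstacle'' is in fact the entire content of the argument, and you do not resolve it. The fibration $[\Omega]\mapsto[J_1]$ over $\mc A_3$ is appealing, but the fibers are not canonically copies of Siegel space with its invariant volume --- the measure on $\mc U_1^+$ comes from the symplectic pairing $\Omega_1\wedge\Omega_2$ on primitive $3$-forms, and there is no a priori reason its restriction to a fiber should be bounded below by the Bergman-type volume on $\mc H_3$. Your fallback of an explicit $1$-parameter family with a tube of uniformly bounded density is closer to a proof, but it is still only a plan: you would need both the density lower bound and pairwise $\mathrm{Sp}(6,\ZZ)$-inequivalence along the family, neither of which you address.

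The paper sidesteps all of this with a short dynamical argument. For $\alpha\in\Lambda^3_{\mathrm{pr}}V^\vee$ let $K_\alpha:V\to V$ be defined by $\omega(K_\alpha v,w)=q_\alpha(v,w)$ (with $q_\alpha$ as in~\eqref{ThreeFormQ}) and set $d_\alpha:=\det K_\alpha$, so that $d_{\lambda\alpha}=\lambda^{12}d_\alpha$ and, by Lemma~\ref{lem_Qposdef}, $d_{\mathrm{Re}\Omega}>0$ for $\Omega\in\mc U(V)$. The function $f(\Omega)=\log d_{\mathrm{Re}\Omega}$ is $\mathrm{Sp}(V)$-invariant and hence descends to $\mc M_1(3)$. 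Under the measure-preserving transformation $T=\mathrm{diag}(2,1/2)\in\mathrm{SL}(2,\RR)$ one has $\mathrm{Re}(T\Omega)=2\,\mathrm{Re}\Omega$, whence $f(T\Omega)=f(\Omega)+12\log 2$. The slabs $f^{-1}\bigl([kC,(k+1)C)\bigr)$ for $k\in\ZZ$ and $C=12\log 2$ therefore partition $\mc M_1(3)$ and are carried onto one another by iterates of $T$; since each has the same positive volume, the total is infinite. No coordinate computation on $\mc U^+$, no Fubini, and no comparison of volume forms is needed --- and the argument in fact shows infiniteness for \emph{any} nonzero Borel measure invariant under the subgroup~\eqref{TeichFlow_subgroup}.
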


We believe there there is still much to be said about the spaces $\mc U(V)$ and open questions are indicated throughout.

\subsection*{Acknowledgements}
The author would like to thank Yu-Wei Fan, Maxim Kontsevich, Pranav Pandit, Hiro Lee Tanaka, and Alex Wright for valuable discussions.

\section{Linear theory}
\label{sec_lin_theory}

In this section we define and study the spaces $\mc U(V)$ assigned to a symplectic vector space $V$ over $\RR$.
A basic difficulty is that the dimension of these spaces (which are Catalan numbers) grow exponentially with increasing dimension of $V$.
For example, while it follows from very general principles that each $\mc U(V)$ is described by finitely many inequalities, we know of a useful explicit set of inequalities only if $\dim_\RR V\leq 4$, see Subsection~\ref{subsec_lowdim}.
Nevertheless, we are able to establish some general properties $\mc U(V)$, such as its topology (Subsection~\ref{TopUn}), functoriality properties (Subsection~\ref{SubSecFunctoriality}), and stability of orbits under the $\mathrm{Sp}(V)$--action (Subsection~\ref{subsec_action}).

\subsection{Primitive forms}

We recall some standard symplectic linear algebra.
Let $(V,\omega)$ be a symplectic vector space of dimension $2n$.
The nilpotent Lefschetz operator $L(\alpha):=\omega\wedge\alpha$ extends to a linear representation of the Lie algebra $\mathfrak{sl}(2)$ on exterior forms, $\Lambda^\bullet V^\vee$.
Concretely, this implies that any $k$-form, $\alpha$, on $V$ has a unique decomposition
\begin{equation}
\alpha=\alpha_k+\omega\wedge\alpha_{k-2}+\omega^2\wedge\alpha_{k-4}+\ldots
\end{equation}
where each $\alpha_i$ is a \textit{primitive} $i$-form, i.e. $\omega^{-1}\llcorner \alpha_i=0$ or equivalently $\omega^{n-i+1}\wedge\alpha_i=0$.
For $0\leq k\leq n$ we write $\Lambda^k_{\mathrm{pr}}V^\vee\subset\Lambda^kV^\vee$ for the subspace of primitive $k$-forms.
Its dimension is
\begin{equation}
\dim\Lambda^k_{\mathrm{pr}}V^\vee=\binom{2n}{k}-\binom{2n}{k-2}
\end{equation}
and in particular $\dim\Lambda^n_{\mathrm{pr}}V^\vee=C_{n+1}$ is the $(n+1)$st Catalan number.

A set of vectors $v_1,\ldots,v_k\in V$ spans an isotropic subspace of dimension $k$ if and only if the $k$-vector $v_1\wedge\ldots\wedge v_k\in\Lambda^kV$ is non-zero and primitive.
Thus, the classical Pl\"ucker embedding restricts to an embedding
\begin{equation}
\mathrm{Gr}_{iso}(k,V)\hookrightarrow \mathbb P\left(\Lambda^k_\mathrm{pr}V\right)
\end{equation} 
of the Grassmannian of isotropic $k$-planes in $V$ into projective space. 
The image is the set of decomposable $k$-vectors which is cut out by the quadratic Pl\"ucker relations, giving $\mathrm{Gr}_{iso}(k,V)$ the structure of a real algebraic variety.

\subsection{A non-vanishing condition}

Any $\RR$--valued $n$-form, $\Omega$, on a $2n$-dimensional symplectic vector space, $V$, (over $\RR$) must vanish on some Lagrangian subspace for topological reasons.
On the other hand, if we choose a compatible complex structure on $V$ and let $\Omega$ be a non-zero complex valued form of type $(n,0)$ or $(0,n)$ then 
\begin{equation}
\Omega|_L\neq 0\in \left(\Lambda^n L^\vee\right)\otimes\CC\cong\CC
\end{equation} 
for any Lagrangian subspace $L\subset V$.
This is easy to see using, for example, the fact that the unitary group, $\mathrm{U}(V)$, acts transitively on the set of Lagrangian subspaces and $\Omega$ transforms as a character of $\mathrm{U}(V)$.
More generally we consider the following types of forms.

\begin{df}
\label{def_U}
A form $\Omega\in\left(\Lambda^n_\mathrm{pr}V^\vee\right)\otimes\CC$ is \textbf{non-vanishing on Lagrangian subspaces} if its pullback to any Lagrangian subspace $L\subset V$ is non-zero.
Let
\begin{equation}
\mc U(V)\subset\left(\Lambda^n_\mathrm{pr}V^\vee\right)\otimes\CC
\end{equation}
be the set of such forms on $V$ and let $\mc U(n):=\mc U(\RR^{2n})$.
By definition we have $\mc U(0)=\CC^*$.
We denote by 
\begin{equation}
\mc U_{\mathrm{geom}}(V)\subset \mc U(V) 
\end{equation} 
the subset of forms which are of type $(n,0)$ for some compatible complex structure on $V$.
\end{df}

\begin{remark}
Note that in the definition of $\mc U(V)$ one could have replaced $\CC$ by any two-dimensional vector space over $\RR$.
The definition of $\mc U_{\mathrm{geom}}(V)$, on the other hand, makes use of the field structure on $\CC$.
\end{remark}

This non-vanishing condition is open and semi-algebraic.

\begin{prop}\label{prop_semialg}
$\mc U(V)$ is an open semi-algebraic subset of $\left(\Lambda^n_\mathrm{pr}V^\vee\right)\otimes\CC$.
\end{prop}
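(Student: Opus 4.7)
The plan is to exhibit $\mc U(V)$ as the complement of the projection of a closed real algebraic incidence variety, then invoke Tarski--Seidenberg and compactness of the Lagrangian Grassmannian.

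First I would set up the incidence variety. Using the Plücker embedding mentioned in the previous subsection, view $\mathrm{LGr}(V) := \mathrm{Gr}_{iso}(n, V)$ as a closed real algebraic subvariety of $\PP(\Lambda^n_\mathrm{pr} V)$. Consider
\begin{equation*}
Z := \bsco{(\Omega, [L]) \in \left((\Lambda^n_\mathrm{pr} V^\vee)\otimes\CC\right) \times \mathrm{LGr}(V) \,:\, \Omega|_L = 0}.
\end{equation*}
Working in a local affine chart where $[L]$ is represented by a decomposable primitive $n$-vector $\ell = v_1 \wedge \cdots \wedge v_n$, the condition $\Omega|_L = 0$ reads $\langle \Omega, \ell\rangle = 0$, i.e. the vanishing of two real polynomials (real and imaginary parts of the natural pairing). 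Hence $Z$ is cut out by polynomial equations and is a closed real algebraic subset of the product.

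Next I would project. By Tarski--Seidenberg, the image $\pi_1(Z) \subset (\Lambda^n_\mathrm{pr} V^\vee)\otimes\CC$ under the first projection is a semi-algebraic set, and it coincides by construction with the complement of $\mc U(V)$: $\Omega \in \pi_1(Z)$ iff there exists some Lagrangian $L$ with $\Omega|_L = 0$. Since $\mathrm{LGr}(V)$ is compact, the projection $\pi_1$ is proper, so $\pi_1(Z)$ is also closed. Therefore $\mc U(V)$ is the complement of a closed semi-algebraic set, hence is itself open and semi-algebraic.

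In terms of difficulty there is essentially no obstacle: the main content is the observation that non-vanishing on Lagrangians is precisely non-membership in a closed semi-algebraic set obtained from a compact algebraic family. The only point requiring a line of care is that the condition $\Omega|_L \neq 0$ for a $\CC$-valued form is the condition that two real functions do not simultaneously vanish, which is still a semi-algebraic (inequality) condition, and that viewing $\CC \cong \RR^2$ in the ambient space poses no problem for Tarski--Seidenberg, which is a statement over $\RR$.
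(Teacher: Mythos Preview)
Your proof is correct and follows essentially the same approach as the paper: define the closed algebraic incidence set $Z\subset\left(\Lambda^n_\mathrm{pr}V^\vee\otimes\CC\right)\times\mathrm{LGr}(V)$, project to the first factor, and use Tarski--Seidenberg for semi-algebraicity together with compactness of $\mathrm{LGr}(V)$ for closedness. Your version is slightly more explicit about the Pl\"ucker description and the splitting of $\Omega|_L=0$ into two real polynomial conditions, but the argument is the same.
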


\begin{proof}
Let $\mathrm{LGr}(V)=\mathrm{Gr}_{iso}(n,V)$ be the Lagrangian Grassmannian of $V$.
Consider the set
\begin{equation}
Z:=\left\{(\Omega,L)\in\left(\Lambda^n_\mathrm{pr}V^\vee\otimes\CC\right)\times\mathrm{LGr}(V)\mid \Omega|_L=0\right\}
\end{equation}
which is closed and algebraic in $\left(\Lambda^n_\mathrm{pr}V^\vee\otimes\CC\right)\times\mathrm{LGr}(V)$.
Its projection to the first factor, $\mathrm{pr}_1(Z)$, is closed by compactness of $\mathrm{LGr}(V)$ and semi-algebraic by the Tarski--Seidenberg theorem (quantifier elimination for semialgebraic sets).
The statement follows since 
\begin{equation}
\left(\Lambda^n_\mathrm{pr}V^\vee\otimes\CC\right)\setminus \mathrm{pr}_1(Z)=\mc U(V).
\end{equation}
\end{proof}

As a consequence, $\mc U(V)$ has the structure of a complex manifold and inherits the indefinite Hermitian metric
\begin{equation}
\langle \Omega_1,\Omega_2\rangle = \overline{\Omega}_1\wedge\Omega_2\left(\frac{\omega^n}{n!}\right)^{-1}
\end{equation}
from $\Lambda^n_\mathrm{pr}V^\vee\otimes\CC$.
Also, $\mc U(V)$ has a natural right action of $\mathrm{Sp}(V)$ and a left action of $\mathrm{GL}(2,\RR)$ coming from its action on $\CC=\RR^2$, and these actions commute.
However, $\mc U(V)$ has much larger dimension than $\mathrm{Sp}(V)$ in general, so cannot be a homogeneous space for that group.

\subsection{Topology of $\mc U(n)$}
\label{TopUn}

The topology of $\mc U(n)$ turns out to be quite simple. 
We will show in this subsection that for $n\geq 1$ there are two connected components, $\mc U^\pm(n)$, both homotopic to $S^1$.

Any $\Omega\in\mc U(n)$ gives a circle--valued ``phase'' map
\begin{equation}\label{CircleValuedMap}
\phi_\Omega:\mathrm{LGr}(V)\to\RR/\pi\ZZ,\qquad L\mapsto \mathrm{Arg}\left(\Omega|_L\right)
\end{equation}
inducing a map between fundamental groups.
Now, $\pi_1(\mathrm{LGr}(V))\cong\mathbb Z$ canonically.
For $V=\CC^n$ the canonical generator is represented by the loop
\begin{equation}
\RR/\pi\ZZ\to \mathrm{LGr}(\CC^n),\qquad \theta\mapsto e^{i\theta}\RR\times\RR^{n-1}
\end{equation}
which $\phi_\Omega$ sends to
\begin{equation}
\mathrm{Arg}\left(c_1\cos(\theta)+c_2\sin(\theta)\right)\in\RR/\pi\ZZ
\end{equation}
where $c_1,c_2\in\CC$ are coefficients of $\Omega$:
\begin{equation} \label{OmegaOnStdLoop}
\Omega=c_1dx_1\wedge\ldots\wedge dx_n+c_2dy_1\wedge dx_2\wedge\ldots\wedge dx_n+\ldots
\end{equation}
Thus, depending on the sign of $\mathrm{Re}(\overline{c_1}c_2)$, $\phi_\Omega$ sends the canonical generator either to the positive or negative generator of $\pi_1(\RR/\pi\ZZ)$.

\begin{df}
If $\dim V>0$ let $\mc U^+(V)$ (resp. $\mc U^-(V)$) be the set of those $\Omega\in\mc U(V)$ which send the canonical generator of $\pi_1(\mathrm{LGr}(V))$ to a positive (resp. negative) loop in $\pi_1(\RR/\pi\ZZ)$.
\end{df}

Note that if $\Omega$ is of type $(n,0)$ with respect to some complex structure on $V$ then it is in $\mc U^+(V)$, i.e. $\mc U_\mathrm{geom}(V)\subset\mc U^+(V)$.
Complex conjugation interchanges $\mc U^+(V)$ and $\mc U^-(V)$.
The next proposition show that $\mc U^\pm(V)$ are the connected components of $\mc U(V)$.

\begin{prop}\label{PropUVTopology}
Let $V$ be a symplectic vector space with choice of compatible almost complex structure. 
Then $\mc U^+(V)$ deformation retracts to $\left(\Lambda^{n,0}V^\vee\right)\setminus\{0\}$.
In particular, $\mc U^+(V)$ is homotopic to $S^1$.
\end{prop}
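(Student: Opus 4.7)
My approach is to use the type decomposition afforded by the compatible $J$ to deform $\Omega$ onto its $(n,0)$-part. The second assertion is immediate: $\Lambda^{n,0}V^\vee$ is a one-dimensional complex vector space, so $\Lambda^{n,0}V^\vee\setminus\{0\}\cong\CC^*\simeq S^1$. This subspace is already contained in $\mc U^+(V)$ because any non-zero $(n,0)$-form lies in $\mc U_{\mathrm{geom}}(V)\subset\mc U^+(V)$.

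For the deformation retract I would try the polynomial homotopy
\[
K_t(\Omega)\;=\;\sum_{p=0}^n(1-t)^{n-p}\,\Omega^{(p,n-p)},\qquad t\in[0,1].
\]
By construction $K_0=\mathrm{id}$, $K_1(\Omega)=\Omega^{(n,0)}$, and $K_t$ fixes $\Lambda^{n,0}V^\vee\setminus\{0\}$ pointwise at every $t$. The remaining task is to show $K_t(\Omega)\in\mc U^+(V)$ for every $\Omega\in\mc U^+(V)$ and $t\in[0,1]$. Fix a Lagrangian $L\subset V$ (automatically totally real since $J$ is compatible), choose a basis of $L$, and set $c_p:=\Omega^{(p,n-p)}|_L$. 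If we introduce the polynomial $P_L(z):=\sum_{p=0}^n c_p z^p\in\CC[z]$, a direct expansion gives
\[
K_t(\Omega)|_L\;=\;\sum_{p=0}^n(1-t)^{n-p}c_p\;=\;(1-t)^n\,P_L\!\left(\tfrac{1}{1-t}\right)\quad\text{for }t<1,
\]
while $K_1(\Omega)|_L=c_n$.

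The central claim I would then aim for is that for $\Omega\in\mc U^+(V)$ all $n$ roots of $P_L$ lie in the open unit disk $\{|z|<1\}$. Granting this, $P_L(1/(1-t))\neq 0$ for $t\in[0,1)$ since $|1/(1-t)|\geq 1$, and at $t=1$ the value $c_n$ is the leading coefficient of a degree-$n$ polynomial, hence non-zero. So $K_t(\Omega)|_L\neq 0$ for every $L$ and $t$, which means $K_t(\Omega)\in\mc U(V)$ throughout; since $\mc U(V)=\mc U^+(V)\sqcup\mc U^-(V)$ is a disjoint union of open subsets and $K_0(\Omega)=\Omega\in\mc U^+(V)$, continuity of $t\mapsto K_t(\Omega)$ forces the path to remain in $\mc U^+(V)$.

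The heart of the proof is the root-location claim, and this is where I expect the real work. My plan is to test $\phi_\Omega$ against the loop $\gamma:\theta\mapsto e^{i\theta}L$ in $\mathrm{LGr}(V)$ for $\theta\in[0,\pi]$, with $e^{i\theta}$ acting via $J$. This loop represents $n$ times the canonical generator of $\pi_1(\mathrm{LGr}(V))$: lifting to $\mathrm{U}(V)/\mathrm{O}(V)$ it becomes $\theta\mapsto e^{i\theta}I$, and $\det^2$ of this winds $n$ times around $S^1$. Hence $\phi_\Omega\circ\gamma$ has winding number $+n$ in $\pi_1(\RR/\pi\ZZ)$ by the definition of $\mc U^+(V)$. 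On the other hand, since $e^{i\theta}$ acts on $V^{1,0}$ and $V^{0,1}$ by $e^{\pm i\theta}$, a $(p,n-p)$-form pulls back by a factor of $e^{i(2p-n)\theta}$, so the value of $\Omega$ on the pushed-forward basis of $e^{i\theta}L$ equals $e^{-in\theta}P_L(e^{2i\theta})$. The argument principle applied to $P_L$ (which has no roots on the unit circle, as this expression is non-vanishing) shows that the winding of the phase in $\RR/\pi\ZZ$ equals $2w-n$, where $w$ is the number of roots of $P_L$ inside the open disk. Equating $2w-n=n$ forces $w=n$, as required.
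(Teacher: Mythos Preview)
Your proof is correct and follows essentially the same approach as the paper: the homotopy $K_t$ is the paper's $\Omega_t$ under the reparametrization $t\mapsto 1-t$, and the root-location argument via the winding of $\theta\mapsto e^{-in\theta}P_L(e^{2i\theta})$ is exactly the paper's computation (the paper uses $\theta\in[0,2\pi]$ and phrases it as a degree-$n$ map to $\mathrm{U}(1)$, but the content is identical).
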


\begin{proof}
We will first show that if $\Omega\in\mc U^+(V)$, then the $(n,0)$-component of $\Omega$ is non-zero.
Write
\begin{equation}
\Omega=\Omega^{n,0}+\Omega^{n-1,1}+\ldots+\Omega^{0,n}
\end{equation}
where $\Omega^{k,n-k}$ is of type $(k,n-k)$, $k=0,\ldots,n$.
Let $w\in\Lambda^n_{\mathrm{pr}}V$ such that $w$ represents a Lagrangian subspace of $V$ (i.e. is decomposable).
Then by assumption on $\Omega^{k,n-k}$ we have
\begin{equation}
\Omega^{k,n-k}\left(e^{i\theta}w\right)=e^{(2k-n)i\theta}\Omega^{k,n-k}(w)
\end{equation}
for $\theta\in\RR$.
Let $a_k:=\Omega^{k,n-k}(w)\in\CC$ for $k=0,\ldots,n$ and $p(z):=a_nz^n+\ldots+a_1z+a_0$ then
\begin{equation}\label{OmegaAsPoly}
\Omega\left(e^{i\theta}w\right)=\left(e^{i\theta}\right)^{-n}p\left(e^{2i\theta}\right).
\end{equation}
The map 
\begin{equation}
[0,2\pi]\to\mathrm{LGr}(V),\qquad \theta\mapsto e^{i\theta}w
\end{equation}
defines a loop of Maslov index $2n$ in the Lagrangian Grassmannian, so since $\Omega$ is positively oriented the map
\begin{equation}
\RR/2\pi\ZZ\to \mathrm{U}(1),\qquad \theta\mapsto \Omega(e^{i\theta}w)
\end{equation}
has degree $n$. 
Looking at \eqref{OmegaAsPoly} this implies that $p$ has degree $n$ with all roots in the open unit disk.
In particular, $\Omega^{n,0}\neq 0$.

For $\Omega\in\mc U^+(V)$ consider
\begin{equation}
\Omega_t:=\Omega^{n,0}+t\Omega^{n-1,1}+\ldots+t^n\Omega^{0,n},\qquad t\in[0,1]
\end{equation}
then 
\begin{equation}
\Omega_1=\Omega,\qquad \Omega_0\in \left(\Lambda^{n,0}V^\vee\right)\setminus\{0\}
\end{equation}
and we claim that $\Omega_t\in\mc U^+(V)$ for all $t\in [0,1]$.
This follows since if $w$, $p$ are as above then
\begin{equation}
\Omega_t(w)=t^np(t^{-1})\neq 0
\end{equation}
as $p$ has no zeros outside the unit disk.
\end{proof}

We can also say more about the maps $\phi_\Omega$.

\begin{coro}
If $\Omega\in\mc U(V)$, then the map $\phi_\Omega:\mathrm{LGr}(V)\to\RR/\pi\ZZ$ defined in \eqref{CircleValuedMap} is a fiber bundle with fibers diffeomorphic to the special Lagrangian Grassmannian $\mathrm{SU}(n)/\mathrm{SO}(n)$, where $2n=\dim V$.
\end{coro}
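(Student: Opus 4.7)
My plan is to verify the two ingredients needed by Ehresmann's fibration theorem—properness is automatic since $\mathrm{LGr}(V)$ is compact—and then identify the fiber. Throughout I may assume $\Omega\in\mc U^+(V)$, since complex conjugation interchanges $\mc U^\pm(V)$ and only reverses the orientation of the target circle.

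For the submersion step I would fix $L\in\mathrm{LGr}(V)$ and pick a compatible complex structure $J$ on $V$ with $JL$ a Lagrangian complement of $L$ (such $J$ always exists: start from any inner product on $L$ and extend via $\omega$). The smooth curve $\gamma(\theta):=e^{J\theta}L$ through $L$ has velocity at $\theta=0$ corresponding, under $T_L\mathrm{LGr}(V)\cong\mathrm{Sym}^2 L^\vee$, to the positive-definite quadratic form $g|_L:=\omega(J\cdot,\cdot)|_L$, and is in particular nonzero. The computation in the proof of Proposition~\ref{PropUVTopology} gives $\Omega(e^{J\theta}w)=e^{-in\theta}p(e^{2i\theta})$ where $w$ is a decomposable $n$-vector spanning $L$ and $p$ has exact degree $n$ with all roots $r_1,\ldots,r_n$ in the open unit disk (this is where $\Omega\in\mc U^+(V)$ enters). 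Differentiating the argument at $\theta=0$ yields
\begin{equation*}
d\phi_\Omega|_L(\dot\gamma(0))=-n+2\mathrm{Re}\sco{\frac{p'(1)}{p(1)}}=-n+2\sum_{k=1}^n\mathrm{Re}\sco{\frac{1}{1-r_k}},
\end{equation*}
and I would invoke the elementary equivalence $\mathrm{Re}(1/(1-r))>1/2\Leftrightarrow|r|<1$ to conclude the right-hand side is strictly positive, so $d\phi_\Omega|_L$ is surjective. Ehresmann then produces a fiber bundle structure over the image of $\phi_\Omega$, which is open (by the submersion property) and closed (compact domain) and hence all of $\RR/\pi\ZZ$.

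To identify the fiber I would first treat $\Omega_0\in\Lambda^{n,0}V^\vee\setminus\{0\}$: taking the implicit complex structure to identify $V=\CC^n$ and $\mathrm{LGr}(V)=\mathrm{U}(n)/\mathrm{O}(n)$, the form $\Omega_0=c\,dz_1\wedge\ldots\wedge dz_n$ gives $\phi_{\Omega_0}([g])=\mathrm{Arg}(c\det g)\bmod\pi$, whose fiber is naturally diffeomorphic to $\mathrm{SU}(n)/\mathrm{SO}(n)$ since $\mathrm{SU}(n)\cap\mathrm{O}(n)=\mathrm{SO}(n)$. For general $\Omega\in\mc U^+(V)$ I would then invoke the explicit path $\Omega_t:=\Omega^{n,0}+t\Omega^{n-1,1}+\ldots+t^n\Omega^{0,n}$ from the proof of Proposition~\ref{PropUVTopology}, which stays in $\mc U^+(V)$ and interpolates between $\Omega^{n,0}$ and $\Omega$. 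The submersion argument applies to every $\Omega_t$, so the family map $(t,L)\mapsto(t,\phi_{\Omega_t}(L))$ is a proper submersion onto $[0,1]\times\RR/\pi\ZZ$, and parametric Ehresmann provides the diffeomorphism between the fiber of $\phi_\Omega$ and the already-identified fiber $\mathrm{SU}(n)/\mathrm{SO}(n)$ of $\phi_{\Omega^{n,0}}$. The one technical point I expect to carry the argument is the strict inequality $\mathrm{Re}(1/(1-r))>1/2$ for $|r|<1$, which upgrades the root-location condition from Proposition~\ref{PropUVTopology} into strict positivity of the directional derivative; everything else is assembly of standard machinery.
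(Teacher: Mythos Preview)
Your proof is correct and follows the same two–step strategy as the paper: show $\phi_\Omega$ is a submersion (hence a fiber bundle by Ehresmann, since $\mathrm{LGr}(V)$ is compact), then identify the fiber in the $(n,0)$ case and transport the identification along a path in $\mc U^+(V)$. The only substantive difference is in the submersion step. The paper uses the Maslov--index--one loop $\theta\mapsto e^{i\theta}\RR\times\RR^{n-1}$ from \eqref{OmegaOnStdLoop}: along this loop the derivative of $\phi_\Omega$ equals $\mathrm{Im}(\overline{c_1}c_2)/|c_1\cos\theta+c_2\sin\theta|^2$, which has constant nonzero sign (nonvanishing of $\Omega$ forces $c_1,c_2$ to be $\RR$--independent). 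You instead use the Maslov--index--$n$ loop $e^{J\theta}L$ and the polynomial $p$ from the proof of Proposition~\ref{PropUVTopology}, reducing positivity of the derivative to the inequality $\mathrm{Re}\bigl(1/(1-r)\bigr)>\tfrac12$ for $|r|<1$. Both routes are valid; the paper's is shorter and more elementary, while yours makes explicit the link to the root--location argument of Proposition~\ref{PropUVTopology} and, via parametric Ehresmann, spells out the step the paper compresses into ``since $\mc U^\pm(V)$ are path connected''.
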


\begin{proof}
The explicit formula \eqref{OmegaOnStdLoop} shows that $\phi_\Omega$ has no critical points, thus gives a fiber bundle.
For $\Omega\in \mc U(V)$ of type $(n,0)$ or $(0,n)$ it is clear that the fibers are diffeomorphic to $\mathrm{SU}(n)/\mathrm{SO}(n)$. 
Since $\mc U^\pm(V)$ are path connected, the claim follows.
\end{proof}

\subsection{Low dimensions}
\label{subsec_lowdim}

\subsubsection{$\mc U(1)$}

If $\dim_\RR V=2$, then $\mc U(V)$ is just the set of $\RR$-linear isomorphisms $V\to\CC$ and $\Omega\in \mc U^+(V)$ iff it is orientation preserving, where the orientation on $V$ is determined by the symplectic structure.
From this it is also clear that any $\Omega\in \mc U^+(V)$ is complex linear for a unique choice of compatible complex structure on $V$, i.e. all forms in $\mc U^+(V)$ are geometric, which happens only for $n=1$.
The group $\mathrm{GL}(V)$ acts freely and transitively on $\mc U(V)$ and $\mathrm{GL}^+(V)$ acts freely and transitively on $\mc U^+(V)$.

\subsubsection{$\mc U(2)$}
\label{SubSubSecU2}

We suppose $\dim_{\RR}V=4$, so $\dim_{\CC}\mc U(V)=5$.
The 5--dimensional space $\Lambda^2_{\mathrm{pr}}V$ of primitive bivectors has a non-degenerate symmetric bilinear form of signature $++---$ given by
\begin{equation}
\langle \alpha,\beta \rangle\frac{\omega^2}{2}:=\alpha\wedge\beta.
\end{equation}
We will use it to identify $\Lambda^2_{\mathrm{pr}}V$ with its dual, $\Lambda^2_{\mathrm{pr}}V^\vee$.
For $V=\CC^2$ an orthonormal basis of this space is given by
\begin{gather}
\nonumber \frac{1}{\sqrt{2}}\mathrm{Re}\left(dz_1\wedge dz_2\right),\quad \frac{1}{\sqrt{2}}\mathrm{Im}\left(dz_1\wedge dz_2\right) \\ \nonumber
\frac{1}{\sqrt{2}}\mathrm{Re}\left(dz_1\wedge d\bar{z}_2\right),\quad \frac{1}{\sqrt{2}}\mathrm{Im}\left(dz_1\wedge d\bar{z}_2\right),\quad 
\frac{i}{2\sqrt{2}}\left(dz_1\wedge d\bar{z}_1-dz_2\wedge d\bar{z}_2\right).
\end{gather}
A bivector $0\neq v\in \Lambda^2_{\mathrm{pr}}V$ comes from a Lagrangian plane in $V$ if and only if $v\wedge v=0$ (Pl\"ucker relation).
Thus, the affine cone over the Lagrangian Grassmannian $\mathrm{LGr}(V)\subset\PP(\Lambda^2_{\mathrm{pr}}V)$ is just the light-cone in $\Lambda^2_{\mathrm{pr}}V$.
This allows us to give explicit inequalities for $\mc U(2)$.

\begin{prop}
A form $\Omega=\alpha+\beta i\in \left(\Lambda^2_{\mathrm{pr}}V^\vee\right)\otimes \CC$ is non-vanishing on Lagrangian subspaces if and only if $\alpha$ and $\beta$ span a spacelike plane in $\Lambda^2_{\mathrm{pr}}V^\vee$, i.e. the symmetric matrix
\begin{equation}
S_\Omega:=\begin{pmatrix} \langle\alpha,\alpha\rangle & \langle\alpha,\beta\rangle \\ \langle\beta,\alpha\rangle & \langle\beta,\beta\rangle \end{pmatrix}
\end{equation}
is positive definite.
\end{prop}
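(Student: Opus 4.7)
The plan is to translate the pointwise non-vanishing condition on Lagrangians into a signature statement for the plane $W=\mathrm{span}(\alpha,\beta)$ inside $\Lambda^2_\mathrm{pr}V^\vee$, exploiting the identification of the Lagrangian Grassmannian with the projectivized light cone established just above the proposition.

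First I would unpack what $\Omega|_L=0$ means. If $L\subset V$ is a Lagrangian plane with representative null bivector $v\in\Lambda^2_\mathrm{pr}V$, then $\Omega|_L=0$ is equivalent to $\alpha(v)=\beta(v)=0$. Using the identification $\Lambda^2_\mathrm{pr}V\cong\Lambda^2_\mathrm{pr}V^\vee$ supplied by $\langle\cdot,\cdot\rangle$, this is precisely the condition $v\in W^\perp$. Since the Lagrangians are exactly the projective classes of non-zero null vectors, one obtains
\[
\Omega\in\mc U(V)\iff W^\perp\text{ contains no non-zero null vector.}
\]

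Next I would establish the equivalence of the right-hand side with $S_\Omega$ being positive definite, i.e.\ $W$ being a spacelike $2$-plane. The easy direction is immediate: if $W$ has signature $(2,0)$, then $W$ is non-degenerate, the splitting $\Lambda^2_\mathrm{pr}V^\vee=W\oplus W^\perp$ is orthogonal, and $W^\perp$ inherits signature $(0,3)$, so it is negative definite and contains no non-zero null vector. For the converse, I would first note that $W$ itself must be non-degenerate, since any non-zero element of its radical would be null and would also lie in $W^\perp$, contradicting the hypothesis. I would then invoke the elementary fact that a non-degenerate subspace containing no non-zero null vector must be definite: given vectors $u_+$ with $\langle u_+,u_+\rangle>0$ and $u_-$ with $\langle u_-,u_-\rangle<0$, the function $t\mapsto\langle u_++tu_-,u_++tu_-\rangle$ changes sign and hence vanishes somewhere. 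Applied to $W^\perp$, which is $3$-dimensional inside an ambient $5$-space of signature $(2,3)$, the only definite possibility is $(0,3)$, which forces $W$ to have signature $(2,0)$, as desired.

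The only mildly delicate point is handling the case where $W$ is degenerate or $\alpha,\beta$ are linearly dependent; this is absorbed by the radical observation above, after which the rest of the argument is pure bookkeeping on signatures.
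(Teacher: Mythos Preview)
Your proposal is correct and follows essentially the same route as the paper: both identify $\Omega|_L=0$ with $v\in W^\perp$ for the representing null bivector $v$, and then argue via signatures that $W^\perp$ avoids the light cone iff $W^\perp$ is definite, forcing it to be $(0,3)$ and hence $W$ to be $(2,0)$. Your write-up is slightly more explicit about the intermediate-value argument and the radical; the paper compresses these. One small imprecision: the case where $\alpha,\beta$ are dependent but $W$ is one-dimensional and non-degenerate is not covered by the radical observation --- it is instead ruled out by your dimension/signature count (a $4$-dimensional $W^\perp$ cannot be definite in signature $(2,3)$), so you should say that rather than folding it into the radical remark.
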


\begin{proof}
Identify $\Lambda^2_{\mathrm{pr}}V\cong\Lambda^2_{\mathrm{pr}}V^\vee$ as before and consider $\alpha,\beta$ as vectors in the former.
A Lagrangian subspace, $L$, is represented by a lightlike bivector $v\neq 0$ and $\Omega$ vanishes on $L$ iff $v$ is orthogonal to both $\alpha$ and $\beta$.
Thus, $\Omega$ is does not vanish on any Lagrangian subspace iff the orthogonal complement $P^\perp$ to the subspace $P\subset \Lambda^2_{\mathrm{pr}}V$ spanned by $\alpha$ and $\beta$ intersects the lightcone in the origin only.
This happens iff the restriction of the symmetric form to $P^\perp$ is definite, which implies $\dim_{\RR}P^\perp=3$ and the symmetric form is negative definite on $P^\perp$. 
Thus $\dim_{\RR}P=2$ and the symmetric form is positive definite on $P$.
\end{proof}

It turns out the the matrix $S_\Omega$ which appeared in the previous proposition is a complete invariant of $\Omega\in\mc U^+(V)$ under the $\mathrm{Sp}(V)$--action.

\begin{prop}
The map
\begin{equation}
\mc U^+(V)\to \mathrm{Mat}(2\times 2,\RR), \quad \Omega\mapsto S_\Omega
\end{equation}
induces a homeomorphism from the quotient space $\mc U^+(V)/\mathrm{Sp}(V)$ to the space $PDS(2,\RR)$ of positive definite symmetric $2\times 2$ matrices over $\RR$.
\end{prop}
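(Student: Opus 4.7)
The plan is to verify in turn: that $S_\Omega$ is $\mathrm{Sp}(V)$-invariant (so the map descends), that the induced map is surjective, that it is injective, and that it is a homeomorphism.

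Invariance is immediate because the bilinear form $\langle\alpha,\beta\rangle\omega^2/2=\alpha\wedge\beta$ on $\Lambda^2_{\mathrm{pr}}V^\vee$ is $\mathrm{Sp}(V)$-invariant. For surjectivity, fix a compatible complex structure on $V\cong\CC^2$; a direct computation shows $\Omega_0=dz_1\wedge dz_2\in\mc U^+(V)$ has $S_{\Omega_0}=2I$. The commuting left action of $\mathrm{GL}^+(2,\RR)$ on $\mc U^+(V)$ coming from its action on $\CC=\RR^2$ transforms $S_\Omega$ by $AS_\Omega A^T$, so taking $A=\sqrt{S/2}\in\mathrm{GL}^+(2,\RR)$ produces a preimage for every $S\in PDS(2,\RR)$. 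The resulting assignment $S\mapsto\sqrt{S/2}\cdot\Omega_0$ is continuous, and combined with the continuity of the polynomial map $\Omega\mapsto S_\Omega$ it will supply the continuous inverse once bijectivity is established.

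Injectivity is the substantive step. The $\mathrm{Sp}(V)$-representation on the $5$-dimensional space $\Lambda^2_{\mathrm{pr}}V^\vee$ preserves the signature $(2,3)$ bilinear form, giving a homomorphism $\mathrm{Sp}(V)\to\mathrm{O}(2,3)$. Its kernel is $\{\pm\mathrm{Id}\}$ (easy from $\Lambda^2 V^\vee=\Lambda^2_{\mathrm{pr}}V^\vee\oplus\RR\omega$ together with the standard kernel of $\Lambda^2$ on $\mathrm{GL}(V)$), and a dimension count plus connectedness of $\mathrm{Sp}(V)$ identifies this map with the standard double cover $\mathrm{Sp}(4,\RR)\cong\mathrm{Spin}(2,3)\to\mathrm{SO}^+(2,3)$. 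Given $\Omega_1,\Omega_2\in\mc U^+(V)$ with $S_{\Omega_1}=S_{\Omega_2}$, writing $\Omega_j=\alpha_j+i\beta_j$ gives two ordered bases $(\alpha_j,\beta_j)$ of positive-definite $2$-planes in $\Lambda^2_{\mathrm{pr}}V^\vee$ with a common Gram matrix. Witt's theorem on $\RR^{2,3}$ yields some $h\in\mathrm{O}(2,3)$ sending the first pair to the second; exploiting the $\mathrm{O}(3)$-worth of freedom on the $3$-dimensional negative-definite complement together with the observation that the relevant orientation invariant is locally constant on the connected space $\mc U^+(V)$ (Proposition~\ref{PropUVTopology}), one adjusts $h$ to lie in $\mathrm{SO}^+(2,3)$ and then lifts through the double cover to the required $g\in\mathrm{Sp}(V)$.

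The main obstacle is this last transitivity step, which rests on the low-dimensional coincidence $\mathrm{Sp}(4,\RR)\cong\mathrm{Spin}(2,3)$. This has no analogue in higher dimensions, consistent with the paper's earlier remark that explicit $\mathrm{Sp}(V)$-invariants of $\mc U^+(V)$ become much harder to describe once $\dim V>4$.
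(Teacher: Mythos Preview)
Your argument is correct and follows essentially the same route as the paper: both proofs hinge on the exceptional isomorphism $\mathrm{Sp}(4,\RR)\to\mathrm{SO}^+(2,3)$ and the resulting transitivity on spacelike $2$-frames with prescribed Gram matrix. Your version is in fact more complete, since you explicitly supply surjectivity (via the $\mathrm{GL}^+(2,\RR)$-action), a continuous section giving the homeomorphism, and the orientation bookkeeping needed to land in $\mathrm{SO}^+(2,3)$, all of which the paper's short proof leaves implicit.
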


\begin{proof}
The symplectic group $\mathrm{Sp}(V)$ fixes $\omega$ by definition and thus preserves the symmetric form on $\Lambda^2_{\mathrm{pr}}V^\vee$.
In fact the image of $\mathrm{Sp}(V)$ in the orthogonal group $\mathrm{O}\left(\Lambda^2_{\mathrm{pr}}V^\vee\right)\cong \mathrm{O}(2,3)$ is the connected component of the identity, $\mathrm{SO}^+(2,3)$, a consequence of the coincidence of root systems $B_2=C_2$.
Since $\mathrm{SO}^+(2,3)$ acts transitively on spacelike planes in $\Lambda^2_{\mathrm{pr}}V^\vee$, the only invariants of $\Omega$ are the lengths of $\alpha$ and $\beta$, and the angle between them.
This is precisely the data recorded by $S_\Omega$.
\end{proof}

It is an easy consequence of the previous proposition that every $\Omega\in\mc U^+(2)$ can be written as
\begin{equation}\label{U2CanonicalForm}
rdz_1\wedge dz_2+cd\bar{z}_1\wedge d\bar{z}_2,\qquad r\in\RR, c\in\CC, r>|c|
\end{equation}
with respect to a suitable symplectic basis.
Note that this form is $\mathrm{SU}(2)$-invariant.
Moreover the double coset space 
\begin{equation}
\mathrm{GL}^+(2,\RR)\backslash\mc U^+(V)/\mathrm{Sp}(V)
\end{equation}
reduces to a point.
This fails for $\dim_\RR V>4$ by a dimension argument.

\subsection{Functoriality}
\label{SubSecFunctoriality}

We again start the subsection with a review of a bit of symplectic linear algebra.
The graph of a linear symplectomorphism $V_1\to V_2$ is a Lagrangian subspace in $V_1\times V_2$ with symplectic form $-p_1^*\omega_1+p_2^*\omega_2$.
More generally one considers arbitrary Lagrangian subspaces of $V_1\times V_2$ with this symplectic form, the \textit{linear Lagrangian correspondences}, as morphisms in a linear symplectic category.
Composition of such morphisms is not continuous on the Lagrangian Grassmannian, and from the modern point of view one should instead use derived intersection, though this will not be necessary for the present discussion.
Any linear Lagrangian correspondence factors into a symplectic quotient, a symplectomorphism, and the inverse of a symplectic quotient \cite{benenti81}.
A linear symplectic quotient (aka reduction) is a Lagrangian correspondence of the form
\begin{equation}
\left\{(w,w+W^\perp)\mid w\in W\right\}\subset V\times (W/W^\perp)
\end{equation}
for some coisotropic subspace $W\subset V$. 
Note that 
\begin{equation}
W^\perp:=\left\{v\in V\mid w\in W\implies\omega(v,w)=0\right\}
\end{equation}
is the symplectic complement and $W$ is coisotropic iff $W^\perp\subset W$.

Fix a coisotropic subspace $W\subset V$ and a non-zero volume element $\nu\in \Lambda^{\mathrm{top}}W^\perp$, then
we get a map
\begin{equation}
\mc U(V)\to \mc U\left(W/W^\perp\right),\qquad \Omega\mapsto\nu\llcorner\Omega\mid_W
\end{equation}
induced by the symplectic quotient correspondence $V\to W/W^\perp$.

Let $(V_1,\omega_1)$ and $(V_2,\omega_2)$ be symplectic vector spaces of dimensions $2n_1$, $2n_2$ respectively, and $\Omega_k\in\left(\Lambda^{n_k}_{\mathrm{pr}}V_k^\vee\right)\otimes\CC$ for $k=1,2$. 
Then $V=V_1\oplus V_2$ is a symplectic vector space with form $\omega:=p_1^*\omega_1+p_2^*\omega_2$, where $p_k:V\to V_k$ are canonical projections, and we can consider
\begin{equation}
\Omega:=p_1^*\Omega_1\wedge p_2^*\Omega_2 \in \left(\Lambda^{n_1+n_2}_{\mathrm{pr}}V^\vee\right)\otimes\CC.
\end{equation}
A natural question at this point is: If $\Omega_k\in \mc U(V_k)$ for $k=1,2$ does it follow that $\Omega\in\mc U(V)$?
This seems to be a somewhat delicate point and we prove only a partial result.
To state it we make the following definition. 

\begin{df}
A form $\Omega\in\mc U^+(V)$ is \textbf{almost geometric} if it is in the $\mathrm{GL}^+(2,\RR)$--orbit of $\mc U_\mathrm{geom}(V)$.
Equivalently, there exists a compatible complex structure on $V$ such that $\Omega$ is a sum of forms of types $(n,0)$ and $(0,n)$, $2n=\dim_\RR V$.
Denote by $\mc U_\mathrm{ag}\subset\mc U^+(V)$ the subset of almost geometric forms.
\end{df}

Note that we have inclusions
\begin{equation}
\mc U_\mathrm{geom}(V)\subset\mc U_\mathrm{ag}(V)\subset \mc U^+(V)\subset \mc U(V)
\end{equation}
which are all strict for sufficiently large dimension of $V$.
In some respects, the almost geometric forms are a more natural class than the geometric ones, c.f. the remark after Definition~\ref{def_U}.
The following theorem also supports this.

\begin{theorem}
Let $V_1$ and $V_2$ be symplectic vector spaces, $\Omega_k\in\mc U^+(V_k)$ for $k=1,2$.
If $\Omega_1$ (or $\Omega_2$) is almost geometric then 
\begin{equation}
p_1^*\Omega_1\wedge p_2^*\Omega_2\in \mc U^+(V_1\oplus V_2)
\end{equation}
where $V_1\oplus V_2$ has symplectic form $p_1^*\omega_1+p_2^*\omega_2$ as above.
\end{theorem}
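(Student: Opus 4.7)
The plan is a three-step reduction: first use the $\mathrm{GL}^+(2,\RR)$-action to assume $\Omega_1$ is geometric, then verify non-vanishing on any Lagrangian via symplectic reduction, and finally upgrade to membership in $\mc U^+$ by connectedness. Since the $\mathrm{GL}^+(2,\RR)$-action on $\CC$-valued forms commutes with wedge and pullback, I may replace $\Omega_1$ by $g\cdot\Omega_1$ so that $\Omega_1 = c\, dz_1 \wedge \cdots \wedge dz_{n_1}$ in Hermitian coordinates associated with some compatible complex structure $J_1$ on $V_1$.

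To prove $(p_1^*\Omega_1 \wedge p_2^*\Omega_2)|_L \neq 0$ for a Lagrangian $L \subset V_1 \oplus V_2$, I set $K_i := L \cap V_i$ and $L_i := p_i(L)$; standard symplectic linear algebra gives $K_i$ isotropic with $L_i = K_i^{\perp_{\omega_i}}$ coisotropic, and $L/(K_1\oplus K_2)$ is the graph of an anti-symplectic isomorphism $\phi: M_1 \to M_2$ between the reductions $M_i := L_i/K_i$, both of dimension $2s$ with $s = n_1 - \dim K_1 = n_2 - \dim K_2$. By the reduction functoriality of Subsection~\ref{SubSecFunctoriality}, contracting with top vectors $\nu_i \in \Lambda^{\mathrm{top}} K_i$ produces reduced forms $\tilde\Omega_i \in \mc U(M_i)$, and evaluating the wedge on a basis adapted to $K_1 \oplus K_2 \subset L$ shows that $(p_1^*\Omega_1 \wedge p_2^*\Omega_2)|_L \neq 0$ is equivalent to $\tilde\Omega_1 \wedge \phi^*\tilde\Omega_2 \neq 0$ as a top form on $M_1$. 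The critical sub-claim is that $\tilde\Omega_1$ stays geometric: compatibility of $J_1$ with $\omega_1$ forces $K_1 \cap J_1 K_1 = 0$, which yields a $J_1$-equivariant symplectic decomposition $V_1 = (K_1 \oplus J_1 K_1) \oplus M$ with $M \cong M_1$, and an explicit computation in a Hermitian basis adapted to $K_1$ then realizes $\tilde\Omega_1$ as a nonzero $(s, 0)$-form on $M_1$.

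Since $\phi$ is anti-symplectic it reverses the sign of the Maslov class, so $\phi^*\tilde\Omega_2 \in \mc U^-(M_1)$; applying Proposition~\ref{PropUVTopology} to its complex conjugate shows that the $(0,s)$-component of $\phi^*\tilde\Omega_2$ is nonzero with respect to any compatible complex structure on $M_1$, in particular the one for which $\tilde\Omega_1$ is $(s, 0)$. In top degree only the $(s,s)$ Hodge component survives, so $\tilde\Omega_1 \wedge \phi^*\tilde\Omega_2 = \tilde\Omega_1 \wedge (\phi^*\tilde\Omega_2)^{0,s}$ is a nonzero product of a top $(s,0)$- and a top $(0,s)$-form. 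For positivity, the non-vanishing just established implies that $\{p_1^*\Omega_1 \wedge p_2^*\Omega_2^t\}$ stays in $\mc U(V_1 \oplus V_2)$ along any path in $\mc U^+(V_2)$; since $\mc U^+(V_2) \simeq S^1$ is connected, deforming $\Omega_2$ to a geometric $\Omega_2^0$ of type $(n_2,0)$ for some $J_2$ yields $p_1^*\Omega_1 \wedge p_2^*\Omega_2^0 \in \mc U_{\mathrm{geom}}(V_1 \oplus V_2) \subset \mc U^+(V_1 \oplus V_2)$ via the product complex structure, placing the whole family in $\mc U^+$. I expect the main obstacle to be the combination of verifying that $\tilde\Omega_1$ remains geometric after reduction by an arbitrary isotropic $K_1$ and the Maslov-sign reversal under anti-symplectic maps; the surrounding steps are essentially bookkeeping in symplectic linear algebra, though organizing the two simultaneous reductions requires care.
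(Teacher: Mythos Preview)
Your overall strategy---reduce via the structure of linear Lagrangian correspondences to a wedge of symplectically reduced forms, then appeal to the $(n,0)$/$(0,n)$ non-vanishing from Proposition~\ref{PropUVTopology}---matches the paper's. The gap is in your first step: the $\mathrm{GL}^+(2,\RR)$-action on $\CC$-valued forms does \emph{not} commute with the $\CC$-bilinear wedge product. That action is by $\RR$-linear automorphisms of $\CC=\RR^2$, and only the subgroup $\CC^*\subset\mathrm{GL}^+(2,\RR)$ respects the multiplicative structure of $\CC$; for a general $g$ there is no useful relation between $(g\cdot\Omega_1)\wedge\Omega_2$ and either $\Omega_1\wedge\Omega_2$ or $g\cdot(\Omega_1\wedge\Omega_2)$. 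So you cannot pass from ``almost geometric'' to ``geometric'' in this way, and the rest of your argument, which relies on $\tilde\Omega_1$ being purely of type $(s,0)$, does not by itself cover the stated hypothesis.

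The repair is exactly what the paper does: carry the almost-geometric hypothesis through the reduction rather than trying to eliminate it. Your own computation shows that the reduction of a geometric form is geometric; since reduction is $\RR$-linear, if $\Omega_1=c_1\Theta+c_2\bar\Theta$ with $\Theta$ of type $(n_1,0)$ and $|c_1|>|c_2|$, then $\tilde\Omega_1=c_1\tilde\Theta+c_2\overline{\tilde\Theta}$ is again almost geometric for the induced complex structure on $M_1$. Your Maslov-reversal argument gives $\phi^*\tilde\Omega_2\in\mc U^-(M_1)$, hence writing its $(s,0)$ and $(0,s)$ components as $d_1\tilde\Theta$ and $d_2\overline{\tilde\Theta}$ one has $|d_2|>|d_1|$, and then
\[
\tilde\Omega_1\wedge\phi^*\tilde\Omega_2=\bigl(c_1d_2\pm c_2d_1\bigr)\,\tilde\Theta\wedge\overline{\tilde\Theta}\neq 0
\]
since $|c_1d_2|>|c_2d_1|$. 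This is precisely the endgame computation in the paper's proof. With this adjustment your argument goes through; the connectedness argument you give for landing in $\mc U^+$ rather than $\mc U^-$ is a clean touch that the paper leaves implicit.
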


The analogous statement fails if $\Omega_1\in\mc U^+(V)$ but $\Omega_2\in \mc U^-(V)$, or if $\Omega_1$ and $\Omega_2$ are not required to be primitive forms.

\begin{proof}
Let $L\subset V_1\oplus V_2$ be a Lagrangian subspace, i.e. a Lagrangian correspondence from $V_1$ to $V_2$ with the negative symplectic form.
By the classification of linear Lagrangian correspondences, the projections $C_k:=p_k(L)\subset V_k$ are coisotropic and there is a diagram
\begin{equation}
\begin{tikzcd}
V_1 & C_1  \arrow[hook]{l}\arrow[two heads]{d} & C_2 \arrow[hook]{r}\arrow[two heads]{d} & V_2 \\
    & C_1/C_1^\perp \arrow{r}{\phi}            & C_2/C_2^\perp
\end{tikzcd}
\end{equation}
where $\phi$ is a linear anti-symplectomorphism so that
\begin{equation}
L=\left\{(v_1,v_2)\in C_1\times C_2\mid \phi(v_1+C_1^\perp)=v_2+C_2^\perp\right\}.
\end{equation}
Choose non-zero elements $\nu_k\in\Det(C_k^\perp)$, where $\Det$ denotes the top exterior power.
In view of the exact sequence
\begin{equation}
0 \longrightarrow L \longrightarrow C_1\oplus C_2 \longrightarrow C_2/C_2^\perp \longrightarrow 0
\end{equation}
and 
\begin{equation}
\Det(C_k/C_k^\perp)\otimes \Det(C_k^\perp)\cong \Det(C_k)
\end{equation}
the $\nu_k$ give an identification
\begin{equation}
\Det(L) \cong \Det(C_1)\otimes\Det(C_2)\otimes\left(\Det\left(C_2/C_2^\perp\right)\right)^{-1} \cong \Det\left(C_1/C_1^\perp\right)
\end{equation}
under which $\Omega|_L\in\Det(L)$ corresponds to
\begin{equation}
\pm\left(\nu_1\llcorner\Omega_1|_{C_1}\right)\wedge \phi^*\left(\nu_2\llcorner\Omega_2|_{C_2}\right)\in \Det\left(C_1/C_1^\perp\right).
\end{equation}

We claim that if $\Omega_1$ is almost geometric, then so is any reduction of it, in particular $\nu_1\llcorner\Omega_1|_{C_1}$.
By induction it suffices to show this if $C_1$ is a hyperplane and $\nu_1$ a vector.
Choosing suitable coordinates we can assume 
\begin{equation}
\nu_1=\partial/\partial x_1,\qquad \Omega_1 = c_1dz_1\wedge\ldots\wedge dz_n+c_2d\bar{z}_1\wedge\ldots\wedge d\bar{z}_n
\end{equation}
but then
\begin{equation}
\nu_1\llcorner\Omega_1|_{C_1}=c_1dz_2\wedge\ldots\wedge dz_n+c_2d\bar{z}_2\wedge\ldots\wedge d\bar{z}_n
\end{equation}
which is almost geometric on $\CC^{n-1}$.

By the preceding arguments we have reduced the problem to showing the following:
Let $V$ be a symplectic vector space $\Omega_1\in\mc U_\mathrm{ag}(V)$, $\Omega_2\in\mc U^+(V)$, and $\phi:V\to V$ an anti-symplectomorphism, then $\Omega_1\wedge\phi^*\Omega_2\neq 0$.
Also is suffices to show this for just one particular $\phi$, which we chose to be complex conjugation $v\mapsto \bar{v}$ on $V=\CC^n$.
In suitable coordinates we have
\begin{equation}
\Omega_1=c_1dZ+c_2d\bar{Z},\qquad \Omega_2=c_3dZ+c_4d\bar{Z}+\ldots
\end{equation}
with $|c_1|>|c_2|$, $|c_3|>|c_4|$, and writing $dZ:=dz_1\wedge\ldots\wedge dz_n$, so
\begin{equation}
\Omega_1\wedge\phi^*\Omega_2=\left(c_1c_3\pm c_2c_4\right)dZ\wedge d\bar{Z}\neq 0
\end{equation}
which completes the proof.
\end{proof}

\begin{q}
Given $\Omega_1\in\mc U^+(V_1)\setminus \mc U_\mathrm{ag}(V_1)$ (so in particular $\dim_\RR V\geq 6$), is there a symplectic vector space $V_2$ and $\Omega_2\in\mc U^+(V_2)$ such that
\begin{equation}
p_1^*\Omega_1\wedge p_2^*\Omega_2\notin \mc U(V_1\oplus V_2)?
\end{equation}
\end{q}

\subsection{The action of $\mathrm{Sp}(V)$ on $\mc U(V)$}
\label{subsec_action}

The action of $\mathrm{Sp}(V)$ on $\mc U(V)$ has much better properties than the action on the entire vector space $\left(\Lambda^n_{\mathrm{pr}}V^\vee\right)\otimes\CC$, for example it is proper with stable orbits in the sense of geometric invariant theory.
We also show in this subsection that $\mc U^+(V)$ fibers $\mathrm{Sp}(V)$-equivariantly over the geometric part $\mc U_\mathrm{geom}(V)$.

\begin{prop} \label{prop_SpActsProperly}
The action of $\mathrm{Sp}(V)$ on $\mc U(V)$ is proper.
\end{prop}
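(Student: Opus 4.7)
My plan is to use the sequential criterion for properness: it suffices to show that whenever $\Omega_n\to\Omega$ in $\mc U(V)$ and $g_n^*\Omega_n\to\Omega'$ in $\mc U(V)$ with $g_n\in\mathrm{Sp}(V)$, the sequence $g_n$ admits a convergent subsequence. The key idea is to combine the non-vanishing condition (which through compactness of the Lagrangian Grassmannian yields a uniform positive lower bound on $|\Omega|_L|$) with the Cartan decomposition of $\mathrm{Sp}(V)$ in order to translate this into a bound on $\|g_n\|$.

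For the setup I would fix a compatible complex structure on $V$, inducing a Euclidean metric and hence a canonical volume form $\mathrm{vol}_L$ on every Lagrangian $L\subset V$; let $K\cong U(n)$ be the resulting maximal compact subgroup of $\mathrm{Sp}(V)$. For $\Omega\in\mc U(V)$ define the continuous function $f_\Omega:\mathrm{LGr}(V)\to\CC$ (well-defined up to sign) by $\Omega|_L=f_\Omega(L)\,\mathrm{vol}_L$. Compactness of $\mathrm{LGr}(V)$ together with $\Omega\in\mc U(V)$ gives $0<m_\Omega\leq|f_\Omega|\leq M_\Omega<\infty$, and a short calculation with orthonormal bases produces the transformation law
\[
|f_{g^*\Omega}(L)|=|f_\Omega(gL)|\cdot|\det(g|_L:L\to gL)|,
\]
where the determinant is the Euclidean Jacobian. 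Applied to our sequences, the uniform bounds on $|f_{\Omega_n}|$ and $|f_{g_n^*\Omega_n}|$ yield $|\det(g_n|_L)|\leq C$ uniformly in $L\in\mathrm{LGr}(V)$ and in $n$.

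The final step translates this into a bound on the operator norm. Using the Cartan decomposition $g=k_1 a k_2$ with $k_i\in K$ and $a=\mathrm{diag}(e^{t_1},\ldots,e^{t_n},e^{-t_1},\ldots,e^{-t_n})$, $t_1\geq\cdots\geq t_n\geq 0$, both $\sup_L|\det(g|_L)|$ and $\|g\|$ are invariant under left and right multiplication by elements of $K$ (which preserve Euclidean volumes on Lagrangians), so the issue reduces to $a$. Taking $L_0$ to be the Lagrangian spanned by the first $n$ coordinate directions, $a(L_0)=L_0$ and $|\det(a|_{L_0})|=e^{t_1+\cdots+t_n}\geq e^{t_1}=\|a\|=\|g\|$. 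Combined with the previous paragraph this yields $\|g_n\|\leq C$, and since singular values of symplectic matrices come in reciprocal pairs also $\|g_n^{-1}\|=\|g_n\|\leq C$. The set $\{g_n\}$ is thus bounded in $\mathrm{End}(V)$, so has a subsequence converging to a limit which lies in the closed subset $\mathrm{Sp}(V)$, establishing properness. The main delicacy I anticipate is the quantitative inequality $\sup_L|\det(g|_L)|\geq\|g\|$ for symplectic $g$; all other steps are routine once the function $f_\Omega$ and its transformation law are set up.
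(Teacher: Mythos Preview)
Your proof is correct and follows essentially the same route as the paper: both fix a compatible complex structure, use the function $L\mapsto |\Omega|_L|$ together with compactness of $\mathrm{LGr}(V)$ to obtain uniform bounds, reduce via a Cartan decomposition to the diagonal case, and evaluate on the standard Lagrangian $\RR^n\subset\RR^{2n}$ to extract $e^{\lambda_1+\cdots+\lambda_n}$ as a bound on the noncompact part. The only differences are cosmetic---you use the sequential criterion and the $KAK$ decomposition where the paper uses a compact exhaustion $B_R$ and the polar decomposition $K\cdot\exp(\mathfrak p)$.
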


\begin{proof}
By definition of a proper group action, we need to show that the map
\begin{equation}\label{MapWeNeedToShowIsProper}
\mathrm{Sp}(V)\times\mc U(V)\to \mc U(V)\times \mc U(V), \qquad (g,\Omega)\mapsto (g^*\Omega,\Omega)
\end{equation}
is proper.

Fix a compatible complex structure on $V$.
In particular this gives a Hermitian metric on $V$ and a Euclidean metric on each Lagrangian subspace $L\subset V$.
Any $\Omega\in \mc U(V)$ then defines a function
\begin{equation}
\log|\Omega|:\mathrm{LGr}(V)\to\RR, \qquad L\mapsto\log\left|\Omega|_L\right|.
\end{equation}
Given $R>0$ consider the compact subset
\begin{equation}
B_R:=\left\{\Omega\in\mc U(V)\mid \left|\log|\Omega|\right|\leq R\right\}
\end{equation}
then 
\begin{equation}
\mc U(V)=\bigcup_{R>0}B_R
\end{equation}
and any compact subset of $\mc U(V)$ is contained in some $B_R$.

The symplectic group has maximal compact subgoup $\mathrm{U}(V)\subset\mathrm{Sp}(V)$.
Let $\mathfrak{sp}(V)=\mathfrak{u}(V)\oplus\mathfrak{p}$ be the Cartan decomposition of the Lie algebra, where $\mathfrak{p}$ consists of symmetric matrices in $\mathfrak{sp}(V)$.
Correspondingly one has the polar decomposition of linear symplectomorphisms, $\mathrm{Sp}(V)=\mathrm U(V)\cdot\exp(\mathfrak{p})$.
Moreover each $X\in\mathfrak{p}$ can be diagonalized by a unitary change of basis.

Let $X\in\mathfrak{p}$, then we may assume that $X$ is diagonal of the form
\begin{equation}
X=\mathrm{diag}(\lambda_1,\ldots,\lambda_n,-\lambda_1,\ldots,-\lambda_n),\qquad \lambda_1\geq\ldots\geq\lambda_n\geq 0
\end{equation}
with respect to some orthonormal symplectic basis.
Let $L=\RR^n$, embedded as coordinate subspace for the first $n$ coordinates, then
\begin{equation}
\log|\Omega\cdot\exp(X)|_L|=\log|\Omega|_L|+\lambda_1+\ldots+\lambda_n.
\end{equation}
It follows that if $\Omega\in B_R$ and $\Omega\cdot \exp(X)\in B_R$ then 
\begin{equation}
\lambda_1+\ldots+\lambda_n\leq 2R
\end{equation}
which establishes a bound on $X\in\mathfrak{p}$.
This shows that the preimage of $B_R\times B_R$ under the map \eqref{MapWeNeedToShowIsProper} is compact.
Since any compact subset of $\mc U(V)\times \mc U(V)$ is contained in $B_R\times B_R$ for sufficiently large $R$, this completes the proof.
\end{proof}

\begin{coro}
The stabilizer of any $\Omega\in\mc U(V)$ is a compact subgroup in $\mathrm{Sp}(V)$.
\end{coro}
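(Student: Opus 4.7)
The plan is very short because this is a direct consequence of Proposition~\ref{prop_SpActsProperly}. For any continuous group action of a topological group $G$ on a Hausdorff space $X$, the stabilizer $\mathrm{Stab}(\Omega)$ of a point $\Omega\in X$ coincides with the fiber over $(\Omega,\Omega)$ of the map
\begin{equation*}
\Phi\colon G\times X\longrightarrow X\times X,\qquad (g,x)\longmapsto (g\cdot x,x),
\end{equation*}
intersected with the slice $G\times\{\Omega\}$. More concretely, $\Phi^{-1}(\{(\Omega,\Omega)\})=\mathrm{Stab}(\Omega)\times\{\Omega\}$, which is homeomorphic to $\mathrm{Stab}(\Omega)$ via the obvious projection.

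First I would recall that $\mathrm{Sp}(V)$ acts on $\mc U(V)$ on the right by $(g,\Omega)\mapsto g^*\Omega$, so the relevant map is precisely \eqref{MapWeNeedToShowIsProper}. By Proposition~\ref{prop_SpActsProperly} this map is proper. Next, the singleton $\{(\Omega,\Omega)\}\subset \mc U(V)\times\mc U(V)$ is compact, so its preimage under a proper map is compact. This preimage is $\mathrm{Stab}(\Omega)\times\{\Omega\}$, hence $\mathrm{Stab}(\Omega)$ is compact. Since stabilizers are automatically closed subgroups (as preimages of $\{\Omega\}$ under the continuous orbit map $g\mapsto g^*\Omega$), we conclude that $\mathrm{Stab}(\Omega)$ is a compact subgroup of $\mathrm{Sp}(V)$.

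There is no real obstacle here; the argument is the standard inference that proper actions have compact stabilizers, and the only thing being used beyond formal nonsense is the previous proposition. One could additionally remark, for concreteness, that combined with the polar decomposition $\mathrm{Sp}(V)=\mathrm{U}(V)\cdot\exp(\mathfrak p)$ used in the proof of Proposition~\ref{prop_SpActsProperly}, the stabilizer of any $\Omega$ is conjugate into the maximal compact subgroup $\mathrm{U}(V)$, but this is not needed for the statement.
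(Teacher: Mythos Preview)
Your proposal is correct and matches the paper's approach exactly: the corollary is stated in the paper without proof, immediately after Proposition~\ref{prop_SpActsProperly}, as the standard consequence that proper actions have compact stabilizers. Your write-up simply makes explicit what the paper leaves to the reader.
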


Next, we study the action of $\mathrm{Sp}(V)$ from the point of view of (real) geometric invariant theory as developped in e.g. \cite{richardson_slodowy}, \cite{boehm_lafuente}.

\begin{prop}
The domain $\mc U(V)$ is a union of closed orbits of the action of $\mathrm{Sp}(V)$ on $\left(\Lambda^n_\mathrm{pr}V^\vee\right)\otimes\CC$. 
Fix a Euclidean inner product on $\left(\Lambda^n_\mathrm{pr}V^\vee\right)\otimes\CC$ such that $\mathrm U(V)$ acts by isometries.
Let $\Omega_0\in \mc U(V)$ and $O:=\Omega_0\cdot\mathrm{Sp}(V)$ the orbit containing $\Omega_0$. 
Then each critical point of the function 
\begin{equation}
S:O\to\RR,\qquad \Omega\mapsto \|\Omega\|^2
\end{equation}
is a minimum and the set of minima is a single $\mathrm U(V)$--orbit.
\end{prop}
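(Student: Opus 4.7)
The plan is to reduce this to the standard Kempf--Ness framework for real reductive groups (see Richardson--Slodowy~\cite{richardson_slodowy}, B\"ohm--Lafuente~\cite{boehm_lafuente}). Fix a compatible complex structure on $V$ inducing the given Euclidean inner product; then $\mathrm{U}(V)\subset\mathrm{Sp}(V)$ is a maximal compact acting by isometries, and in the Cartan decomposition $\mathfrak{sp}(V)=\mathfrak{u}(V)\oplus\mathfrak{p}$ the summand $\mathfrak p$ acts by self-adjoint operators on $(\Lambda^n_{\mathrm{pr}}V^\vee)\otimes\CC$. In this setting the general theory tells us that for every orbit $O=\Omega_0\cdot\mathrm{Sp}(V)$, the function $F_{\Omega_0}(X):=\log\|\Omega_0\cdot\exp(X)\|^2$ is convex on $\mathfrak p$, every critical point of $S|_O$ is a minimum, the set of minima (if non-empty) is a single $\mathrm{U}(V)$-orbit, and $O$ is closed in the ambient vector space iff $S|_O$ attains a minimum. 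Hence the entire proposition reduces to showing that $S$ attains a minimum on each orbit meeting $\mc U(V)$.

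To exhibit the minimum I would show that $F_{\Omega_0}$ is proper on $\mathfrak p$, essentially repackaging the inequality already used in the proof of Proposition~\ref{prop_SpActsProperly}. Diagonalize $X\in\mathfrak p$ in an orthonormal symplectic basis with eigenvalues $(\lambda_1,\ldots,\lambda_n,-\lambda_1,\ldots,-\lambda_n)$, $\lambda_1\geq\cdots\geq\lambda_n\geq 0$, and let $L$ be the Lagrangian spanned by the first $n$ basis vectors. Exactly as in the earlier argument,
\begin{equation*}
\bigl|(\Omega_0\cdot\exp(X))|_L\bigr|=e^{\lambda_1+\cdots+\lambda_n}\,\bigl|\Omega_0|_L\bigr|.
\end{equation*}
Since $\Omega_0\in\mc U(V)$ is nowhere-vanishing on the compact Lagrangian Grassmannian $\mathrm{LGr}(V)$ and $\mathrm{U}(V)$ acts transitively on $\mathrm{LGr}(V)$, the quantity $m:=\inf_{L\in\mathrm{LGr}(V)}|\Omega_0|_L|$ (with respect to the norm induced by the metric) is strictly positive. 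Combined with the elementary bound $\lambda_1\geq\|X\|/\sqrt{2n}$ (since $\|X\|^2=2\sum_i\lambda_i^2$) and continuity of the restriction map to $L$, one obtains $\|\Omega_0\cdot\exp(X)\|\geq c\,m\,e^{\|X\|/\sqrt{2n}}$ for some constant $c>0$, so $F_{\Omega_0}\to+\infty$ as $\|X\|\to\infty$. Continuity and properness then force $F_{\Omega_0}$ to attain a minimum, which by the polar decomposition $\mathrm{Sp}(V)=\mathrm U(V)\cdot\exp(\mathfrak p)$ coincides (up to the $\mathrm U(V)$-action) with a minimum of $S|_O$.

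Invoking the Kempf--Ness package then delivers all three conclusions at once: every $\mathrm{Sp}(V)$-orbit meeting $\mc U(V)$ is closed in $(\Lambda^n_{\mathrm{pr}}V^\vee)\otimes\CC$ (so $\mc U(V)$ is a union of closed orbits), critical points of $S|_O$ are minima by convexity of $F_{\Omega_0}$, and the set of minima is a single $\mathrm{U}(V)$-orbit. The main potential obstacle is that the Lagrangian $L$ appearing in the key inequality depends on $X$ through the diagonalizing basis; this is resolved by the transitivity of $\mathrm{U}(V)$ on $\mathrm{LGr}(V)$, which means the infimum yielding $m$ is over the full compact space $\mathrm{LGr}(V)$ and therefore uniform in $X$. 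Beyond this, no novel input is required: the computation is identical to the one in the proof of Proposition~\ref{prop_SpActsProperly}, and the structural statements are black-boxed from real GIT.
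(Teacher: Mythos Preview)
Your argument is correct and follows essentially the same route as the paper: both reduce the structural statements (closed orbits, critical points are minima, minima form a single $\mathrm U(V)$-orbit) to the real Kempf--Ness/Richardson--Slodowy package, and both establish the needed input via the same computation already appearing in Proposition~\ref{prop_SpActsProperly}, namely that the coefficient along the Lagrangian $L$ spanned by the first $n$ basis vectors scales by $e^{\lambda_1+\cdots+\lambda_n}$. The only cosmetic difference is that the paper verifies the Hilbert--Mumford criterion (showing $\lim_{t\to\infty}\Omega_0\cdot\exp(tX)$ cannot exist unless $X=0$), whereas you verify properness of the Kempf--Ness function $F_{\Omega_0}$ directly; these are the two standard equivalent ways to certify orbit closedness in this framework, and neither buys anything the other does not.
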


\begin{proof}
Let $\mathfrak{sp}(V)=\mathfrak{u}(V)\oplus\mathfrak{p}$ be the Cartan decomposition as before.
The proof is essentially an application of geometric invariant theory for real reductive groups.
To prove closedness of orbits in $\mc U(V)$ we will check a version of the Hilbert--Mumford criterion.
More precisely, let $\Omega_0\in\mc U(V)$ and $X\in\mathfrak{p}$, then we need to show that if the limit
\begin{equation}
\lim_{t\to+\infty}\Omega_0\exp(tX)
\end{equation} 
exists, then it is contained in the same orbit, $\Omega_0\cdot\mathrm{Sp}(V)$.

To see this let $X\in\mathfrak{p}$, then we may assume that $X$ is diagonal of the form
\begin{equation}
X=\mathrm{diag}(\lambda_1,\ldots,\lambda_n,-\lambda_1,\ldots,-\lambda_n),\qquad \lambda_1\geq\ldots\geq\lambda_n\geq 0
\end{equation}
with respect to some orthonormal symplectic basis.
The coefficient of $dx_1\wedge\ldots\wedge dx_n$ in $\Omega_0\exp(tX)$ is
\begin{equation}
ce^{(\lambda_1+\ldots+\lambda_n)t}
\end{equation}
for some $c\in\CC$. 
In fact $c\neq 0$ by the assumption that $\Omega_0$ is non-vanishing on Lagrangian subspaces.
It follows that the limit exists only if $\lambda_k=0$ for all $k$, i.e. $X=0$.
\end{proof}

\begin{coro}
For given $\Omega\in\mc U(V)$ there is a unique compatible complex structure, $J$, on $V$ which minimizes $\|\Omega\|^2$ for the inner product on $\left(\Lambda^n_\mathrm{pr}V^\vee\right)\otimes\CC$ induced by $J$.
The stabilizer of $\Omega$ in $\mathrm{Sp}(V)$ is a compact subgroup of $\mathrm U(V)$ where $V$ is equipped with the minimizing complex structure.
\end{coro}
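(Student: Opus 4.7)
The plan is to reduce both claims to the preceding proposition by re-parametrizing compatible complex structures via the $\mathrm{Sp}(V)$-action. Fix a reference compatible $J_0$ with inner product $\|\cdot\|_{J_0}$. Since $\mathrm{Sp}(V)$ acts transitively on compatible complex structures with stabilizer $\mathrm{U}(V,J_0)$, every compatible $J$ has the form $gJ_0 g^{-1}$, and the observation that $g\colon (V,g_{J_0}) \to (V,g_J)$ is an isometry of Riemannian metrics yields
\[
\|\Omega\|^2_J = \|\Omega \cdot g\|^2_{J_0}.
\]
The right-hand side depends only on the coset $g\,\mathrm{U}(V,J_0)$, hence only on $J$, because $\mathrm{U}(V,J_0)$ acts by isometries. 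Minimizing $J \mapsto \|\Omega\|^2_J$ is therefore equivalent to minimizing $S(\Omega') := \|\Omega'\|^2_{J_0}$ on the orbit $O = \Omega \cdot \mathrm{Sp}(V)$.

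By the preceding proposition, the set of minimizers $M \subset O$ is a single $\mathrm{U}(V,J_0)$-orbit, say $M = \Omega \cdot g_1 \cdot \mathrm{U}(V,J_0)$. Since every $u \in \mathrm{U}(V,J_0)$ commutes with $J_0$, all elements of $M$ correspond to one and the same complex structure $J_1 := g_1 J_0 g_1^{-1}$, yielding existence of a minimizer. Any alternative choice $g_2$ with $\Omega \cdot g_2 \in M$ has $g_2 = s g_1 u$ for some $s \in \mathrm{Stab}(\Omega)$ and $u \in \mathrm{U}(V,J_0)$, producing the structure $sJ_1 s^{-1}$. So uniqueness of the minimizing $J$ is equivalent to the containment $\mathrm{Stab}(\Omega) \subseteq \mathrm{U}(V,J_1)$, which is the second assertion of the corollary (compactness of the stabilizer being already provided by the corollary to Proposition~\ref{prop_SpActsProperly}). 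Conjugating by $g_1$, this reduces to showing $\mathrm{Stab}(\Omega_1) \subseteq \mathrm{U}(V,J_0)$ for the minimizer $\Omega_1 := \Omega \cdot g_1 \in M$.

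To prove this last claim I would imitate the convexity argument from the proof of the preceding proposition. Given $s \in \mathrm{Stab}(\Omega_1)$, the polar decomposition writes $s = u\exp(X)$ with $u \in \mathrm{U}(V,J_0)$ and $X \in \mathfrak{p}$, so the stabilizer equation becomes $\Omega_1 u \exp(X) = \Omega_1$. Diagonalizing $X$ in an orthonormal symplectic basis, the function $f(t) := \|\Omega_1 u \exp(tX)\|^2_{J_0}$ expands as a sum of positive exponentials $\sum_I |c_I|^2 e^{2\mu_I t}$, hence is convex and real-analytic. It attains the global minimum $\min S$ at both $t=0$ (since $u$ is an isometry) and $t=1$, so it is constant, and every $I$ with $c_I \neq 0$ satisfies $\mu_I = 0$. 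The main obstacle is extracting the conclusion from this: selecting $I$ to correspond to the coordinate Lagrangian $\mathrm{span}(x_1,\ldots,x_n)$, the non-vanishing condition defining $\mc U(V)$ forces $c_I \neq 0$ (exactly as in the proof of the preceding proposition), while $\mu_I = \lambda_1 + \cdots + \lambda_n$ with all $\lambda_i \geq 0$. Hence $X = 0$ and $s = u \in \mathrm{U}(V,J_0)$, completing the argument.
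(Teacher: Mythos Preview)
Your argument is correct. The paper does not spell out a proof of this corollary, treating it as an immediate consequence of the preceding proposition together with the real GIT results of Richardson--Slodowy cited there; what you have written is precisely the standard Kempf--Ness convexity argument that those references encapsulate, so your approach is the expected one made explicit.
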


Let $\mathcal J$ be the space of compatible complex structures on $V$. 
For a given choice of $J_0\in\mathcal J$ we obtain an identification $\mathcal J=\mathrm{Sp}(V)/\mathrm U(V)$.
Let $\mathcal G$ be the $\CC^*$-bundle over $\mathcal J$ with fiber the complex-linear volume forms, $\Lambda^{n,0}V^\vee\setminus \{0\}$. 
By the above corollary, the natural map $\mathcal G\to\mc U(V)$, whose image are the \textit{geometric} elements of $\mc U(V)$, is an injection.
Furthermore we have $\mathrm{Sp}(V)$ equivariant maps
\begin{equation}
\mc U^+(V)\longrightarrow \mc U_\mathrm{geom}(V)=\mathcal G\longrightarrow\mathcal J
\end{equation}
where the first map sends $\Omega\in\mc U^+(V)$ to its $(n,0)$-component with respect to the minimizing complex structure.

\subsection{$\mc U(3)$}

Suppose $\dim_{\RR}V=6$, thus $\dim_{\CC}\mc U(V)=14$ and $\dim_\CC\mc U_\mathrm{geom} (V)=7$.

The classification of alternating trilinear forms on a six-dimensional vector space goes back to Reichel~\cite{reichel1907}.
Restricting to primitive forms and the action of $\mathrm{Sp}(6)\subset \mathrm{GL}(6)$, the problem was solved for algebraically closed fields of characteristic zero by Igusa~\cite{igusa70}.
Over $\RR$, the classification is contained in work of Lychagin--Rubtsov--Chekalov~\cite{lychagin93}.
For given $\alpha\in \Lambda^3_\mathrm{pr}V^\vee$ the key is to consider the symmetric bilinear form, $q_\alpha$, on $V$ given by
\begin{equation}\label{ThreeFormQ}
q_\alpha(X,Y)\frac{\omega^3}{3!}=(X\llcorner\alpha)\wedge(Y\llcorner\alpha)\wedge\omega.
\end{equation}
In fact the map 
\begin{equation}
\alpha\mapsto q_\alpha,\qquad \Lambda^3_\mathrm{pr}V^\vee\to\mathrm{Sym}^2(V^\vee)\cong \mathfrak{sp}(V)
\end{equation}
has an interpretation as a moment map for the $\mathrm{Sp}(V)$ action on $\Lambda^3_\mathrm{pr}V^\vee$.
Since $V$ has a natural volume element, $\omega^3/3!$, the determinant of $q_\alpha$ is a well defined real number, in fact the only continuous invariant of $\alpha$.
Furthermore, the Pl\"ucker relations for $\mathrm{LGr}(V)$ are just $q_\alpha=0$.

We are interested in the case when $\alpha$ is the real or imaginary part of $\Omega\in\mc U(V)$, in which case $q_\alpha$ turns out to be positive definite. 

\begin{lemma}\label{lem_Qposdef}
Let $\dim_\RR V=6$ and $\Omega=\alpha+i\beta\in\mc U(V)$, then $q_\alpha$ defined in~\eqref{ThreeFormQ} is positive definite.
\end{lemma}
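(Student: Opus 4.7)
The plan is to prove $q_\alpha(X,X)>0$ for every nonzero $X\in V$ by reducing to the four-dimensional case treated in Subsection~\ref{SubSubSecU2}. Fix $X\neq 0$ and let $W:=X^{\perp_\omega}$, a $5$-dimensional coisotropic subspace with $W^\perp=\langle X\rangle$, and $Q:=W/\langle X\rangle$ the four-dimensional symplectic quotient with induced form $\omega_Q$. The functoriality construction of Subsection~\ref{SubSecFunctoriality} (with $\nu=X$) provides a reduced form
\[
\Omega':=X\llcorner\Omega\bigr|_{W}\ \in\ \bigl(\Lambda^2_{\mathrm{pr}}Q^\vee\bigr)\otimes\CC.
\]
Primitivity of $\Omega'$ is immediate: contracting $X$ into $\Omega\wedge\omega=0$ gives $(X\llcorner\Omega)\wedge\omega=\Omega\wedge(X\llcorner\omega)$, and $(X\llcorner\omega)|_W=0$ by definition of $W$, so $\Omega'\wedge\omega_Q=0$. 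Moreover $\Omega'\in\mathcal U(Q)$, since any Lagrangian $L'\subset Q$ lifts to a Lagrangian $L\subset V$ containing $X$, and $\Omega|_L\neq 0$ translates into $\Omega'|_{L'}\neq 0$.

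The central identity to establish is
\[
q_\alpha(X,X)\ =\ \langle\alpha',\alpha'\rangle_Q,
\]
where $\alpha':=\mathrm{Re}(\Omega')$ and $\langle\cdot,\cdot\rangle_Q$ is the symmetric form on $\Lambda^2_{\mathrm{pr}}Q^\vee$ from Subsection~\ref{SubSubSecU2}. This is a direct wedge-product computation: choose a symplectic basis $e_1,\ldots,e_3,f_1,\ldots,f_3$ with $X=e_1$, so that $W=\ker f_1^*$ and $\omega=e_1^*\wedge f_1^*+\omega_Q$ with $\omega_Q=e_2^*\wedge f_2^*+e_3^*\wedge f_3^*$. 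Writing $\alpha=e_1^*\wedge\alpha_1+\alpha_0$ where $\alpha_0,\alpha_1$ contain no $e_1^*$, one checks $X\llcorner\alpha=\alpha_1$. Splitting further $\alpha_1=f_1^*\wedge\alpha_{11}+\alpha_{12}$ with $\alpha_{11},\alpha_{12}$ free of $e_1^*,f_1^*$, one finds
\[
\alpha_1\wedge\alpha_1\wedge\omega\ =\ e_1^*\wedge f_1^*\wedge\alpha_{12}\wedge\alpha_{12},
\]
all other terms vanishing for dimensional reasons (in particular $\alpha_{12}\wedge\alpha_{12}\wedge\omega_Q=0$ since these forms live on the four-dimensional span of $e_2^*,e_3^*,f_2^*,f_3^*$). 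Since $\alpha_{12}$ descends to $\alpha'$ on $Q$ and $\omega^3/3!=(e_1^*\wedge f_1^*)\wedge(\omega_Q^2/2!)$, the identity follows from the defining relations of $q_\alpha$ and $\langle\cdot,\cdot\rangle_Q$.

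With the identity in hand, the proposition of Subsection~\ref{SubSubSecU2} characterizing $\mathcal U$ in dimension four applies to $\Omega'$: the real and imaginary parts $\alpha',\beta'$ span a spacelike plane in $\Lambda^2_{\mathrm{pr}}Q^\vee$, so in particular $\langle\alpha',\alpha'\rangle_Q>0$, hence $q_\alpha(X,X)>0$. Varying $X$ yields positive definiteness. The only obstacle is the bookkeeping in the wedge-product identity; everything else is a conceptual application of symplectic reduction and the four-dimensional classification.
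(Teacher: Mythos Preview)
Your proof is correct and follows the same overall strategy as the paper: reduce along the coisotropic $W=X^{\perp_\omega}$ to the four-dimensional quotient $Q$, apply the $\dim_\RR V=4$ description of $\mc U(V)$ to conclude $\langle\alpha',\alpha'\rangle_Q>0$, and identify this with $q_\alpha(X,X)$.

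The difference lies in how the identity $q_\alpha(X,X)=\langle\alpha',\alpha'\rangle_Q$ is established. You choose a symplectic basis with $X=e_1$ and compute $(X\llcorner\alpha)^2\wedge\omega$ directly, showing that only the $e_1^*\wedge f_1^*\wedge\alpha_{12}\wedge\alpha_{12}$ term survives. The paper instead takes a more geometric route: it first argues, via a topological consideration about loops of Lagrangians containing a fixed isotropic $2$-plane, that the kernel $N$ of the $2$-form $X\llcorner\alpha$ must be a two-dimensional \emph{symplectic} subspace; then it uses the splitting $V=N\oplus N^\perp$ (identifying $N^\perp$ with $W/W^\perp$) to see that $X\llcorner\alpha$ is pulled back from $N^\perp$ and hence $(X\llcorner\alpha)^2\wedge\omega$ reduces to $\alpha'\wedge\alpha'$ wedged with $\omega|_N$. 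Your coordinate computation bypasses the kernel analysis entirely, which makes it shorter and more elementary; the paper's argument, on the other hand, yields the additional structural fact that $X\llcorner\alpha$ has rank four with symplectic kernel for every nonzero $X$, which is of independent interest but not needed for the lemma itself.
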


\begin{proof}
Let $v\in V$, $v\neq 0$.
Consider the kernel, $N\subset V$, of the alternating 2-form $v\llcorner \alpha$. 
Any alternating 2-form has even rank, so $N$ must be even-dimensional.
Furthermore $v\in N$, and we claim that $N\subset V$ is a symplectic subspace with $\dim_\RR N=2$.
Otherwise, i.e. if $N$ is Lagrangian or $\dim_\RR N>2$, we could find a vector $w\in N$, linearly independent from $v$, such that $\omega(v,w)=0$.
But then there is a non-contractible loop, $L_t$, in $\mathrm{LGr}(V)$ of Lagrangian subspaces which all contain $v$ and $w$ and on which $\alpha$ vanishes.
This forces the loop $\mathrm{Arg}\left(\Omega|_{L_t}\right)$ to be constant, contradicting the assumption $\Omega\in\mc U(V)$, c.f. the discussion at the beginning of Subsection~\ref{TopUn}.

Let $W\subset V$ be the symplectic complement of the line spanned by $v$, so $W$ is coisotropic of dimension five.
As in Subsection~\ref{SubSecFunctoriality} we get 
\begin{equation}
\alpha'+i\beta':=v\llcorner\Omega|_W\in\mc U(W/W^\perp)
\end{equation}
and from Subsection~\ref{SubSubSecU2} we know that $\alpha',\beta'$ span a positive definite 2-plane in $\Lambda^2_\mathrm{pr}(W/W^\perp)^\vee$, in particular $\alpha'\wedge\alpha'$ is a positive multiple of the square of the symplectic form on $W/W^\perp$.
But $N^\perp\subset W$ and $W/W^\perp$ may be identified with $N^\perp$ as a symplectic vector space.
Under this identification we have $v\llcorner\alpha=\alpha'$ and so $(v\llcorner\alpha)\wedge(v\llcorner\alpha)\wedge \omega$ must be a positive multiple of $\omega^3$.
\end{proof}

The classification result of \cite{lychagin93} implies that if $\alpha,\beta\in\Lambda^3_\mathrm{pr}V^\vee$ with $q_\alpha,q_\beta$ positive definite, then $\alpha$ and $\beta$ lie in the same $\mathrm{Sp}(V)$ orbit if and only if $\det(q_\alpha)=\det(q_\beta)$.
In particular, if $q_\alpha$ is positive definite then there is a compatible almost complex structure on $V$ and $\Omega\in\Lambda^{3,0}V^\vee$ such that $\alpha=\mathrm{Re}(\Omega)$.
We have proven the following.

\begin{prop}\label{PropU3PairGeometric}
Let $\dim_\RR V=6$ and $\Omega\in\mc U(V)$, then there is a pair $J_1,J_2$ of compatible complex structures on $V$ and $\Omega_1,\Omega_2\in\left(\Lambda^3_\mathrm{pr}V^\vee\right)\otimes\CC$ such that $\Omega_k$ is of type $(3,0)$ with respect to $J_k$ and
\begin{equation}
\Omega=\mathrm{Re}(\Omega_1)+i\mathrm{Im}(\Omega_2).
\end{equation}
\end{prop}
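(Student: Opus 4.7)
The plan is to decompose $\Omega\in\mc U(V)$ into real and imaginary parts, apply Lemma~\ref{lem_Qposdef} separately to each, and then invoke the classification result from~\cite{lychagin93} cited in the paragraph preceding the proposition.

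First I would write $\Omega = \alpha + i\beta$ with $\alpha,\beta\in\Lambda^3_\mathrm{pr} V^\vee$. Lemma~\ref{lem_Qposdef} applied directly to $\Omega$ gives that $q_\alpha$ is positive definite. To obtain the corresponding statement for $\beta$, I would exploit the fact that $\mc U(V)$ is preserved by the natural $\mathrm{GL}(2,\RR)$-action on coefficients (the defining condition $\Omega|_L\neq 0$ is obviously invariant under multiplying $\Omega$ by a nonzero complex scalar). In particular $-i\Omega = \beta - i\alpha$ still lies in $\mc U(V)$, and a second application of Lemma~\ref{lem_Qposdef} to $-i\Omega$ shows that $q_\beta$ is positive definite as well. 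As a by-product this also forces $\alpha\neq 0$ and $\beta\neq 0$.

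Next I would invoke the cited classification of Lychagin--Rubtsov--Chekalov: any $\gamma\in\Lambda^3_\mathrm{pr} V^\vee$ with $q_\gamma$ positive definite equals $\mathrm{Re}(\Omega')$ for some $(3,0)$-form $\Omega'$ with respect to a suitable compatible complex structure on $V$. Applied to $\alpha$ this yields a pair $(J_1,\Omega_1)$ with $\Omega_1$ of type $(3,0)$ for $J_1$ and $\mathrm{Re}(\Omega_1)=\alpha$. Applied to $\beta$ it yields a pair $(J_2,\widetilde\Omega_2)$ with $\widetilde\Omega_2$ of type $(3,0)$ for $J_2$ and $\mathrm{Re}(\widetilde\Omega_2)=\beta$. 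To convert this real part into the desired imaginary part, I set
\begin{equation*}
\Omega_2 := i\,\widetilde\Omega_2,
\end{equation*}
which remains of type $(3,0)$ with respect to $J_2$ since multiplication by $i$ preserves Hodge type. Then $\mathrm{Im}(\Omega_2) = \mathrm{Re}(\widetilde\Omega_2) = \beta$, so
\begin{equation*}
\mathrm{Re}(\Omega_1) + i\,\mathrm{Im}(\Omega_2) = \alpha + i\beta = \Omega,
\end{equation*}
completing the proof.

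The real content of the argument is imported: Lemma~\ref{lem_Qposdef} supplies the positive-definiteness of $q$, and~\cite{lychagin93} supplies the classification that upgrades such a $q$ to a compatible complex structure together with a $(3,0)$-form. The only novel step is the symmetry observation that Lemma~\ref{lem_Qposdef} applies equally to $\mathrm{Im}(\Omega)$ via rotation by $-i$; there is no genuine obstacle beyond cleanly marshalling these ingredients.
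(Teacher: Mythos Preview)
Your proof is correct and follows essentially the same route as the paper's own argument, which is contained in the paragraph immediately preceding the proposition (the paper simply writes ``We have proven the following'' after invoking Lemma~\ref{lem_Qposdef} and the classification from~\cite{lychagin93}). You make explicit the one step the paper leaves to the reader, namely that $q_\beta$ is also positive definite via the rotation $-i\Omega\in\mc U(V)$; otherwise the arguments are identical.
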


If $J$ is any complex structure on $\CC^n$ which is compatible with its standard symplectic structure, then there is a unitary change of coordinates (preserving the standard hermitian structure) such that
\begin{equation}
J=\left(
\begin{array}{c|c}
0 & \begin{array}{ccc}
    -\lambda_1^{-1} & & \\
    & \ddots & \\
    & & -\lambda_n^{-1}
  \end{array} \\
\hline
\begin{array}{ccc}
    \lambda_1 & & \\
    & \ddots & \\
    & & \lambda_n
  \end{array} & 0
\end{array}
\right)
\end{equation}
with $0<\lambda_1\leq\ldots\leq\lambda_n\leq 1$.
In these coordinates the 1-forms $\lambda_kdx_k+idy_k$ are of type $(1,0)$ with respect to $J$.

\begin{coro}
Suppose $\Omega\in\mc U(\CC^3)$, then after a suitable linear symplectic change of coordinates we get
\begin{align*}
\Omega=&\mathrm{Re}\left(c_1dz_1\wedge dz_2\wedge dz_3\right) \\
&+i\mathrm{Im}\left(c_2(\lambda_1dx_1+idy_1)\wedge(\lambda_2dx_2+idy_2)\wedge(\lambda_3dx_3+idy_3)\right)
\end{align*} 
for some $c_1,c_2\in\CC^*$ and $0<\lambda_1\leq\lambda_2\leq\lambda_3\leq 1$.
\end{coro}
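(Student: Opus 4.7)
The plan is to apply Proposition~\ref{PropU3PairGeometric} and then cascade normal forms: first normalize $J_1$ by an element of $\mathrm{Sp}(V)$, then use the residual $\mathrm{U}(3)$-symmetry to normalize $J_2$.

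First I would invoke Proposition~\ref{PropU3PairGeometric} to write $\Omega=\mathrm{Re}(\Omega_1)+i\,\mathrm{Im}(\Omega_2)$, where each $\Omega_k$ is of type $(3,0)$ with respect to some compatible complex structure $J_k$ on $V=\CC^3$. Note that $\Omega_1\neq 0$ and $\Omega_2\neq 0$, since otherwise the respective form $q_{\mathrm{Re}(\Omega_1)}$ or $q_{\mathrm{Im}(\Omega_2)}$ would vanish, contradicting the positive-definiteness established in Lemma~\ref{lem_Qposdef}. Since the set of compatible complex structures on the standard symplectic $\RR^{6}$ is the homogeneous space $\mathrm{Sp}(6,\RR)/\mathrm{U}(3)$, I can apply a linear symplectomorphism that carries $J_1$ to the standard complex structure. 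In these new coordinates $\Omega_1$ is a nonzero $(3,0)$-form for the standard $J_1$, and since $\Lambda^{3,0}\CC^3\cong\CC$ is spanned by $dz_1\wedge dz_2\wedge dz_3$, we have $\Omega_1=c_1\,dz_1\wedge dz_2\wedge dz_3$ for some $c_1\in\CC^*$.

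Next I would use the stabilizer of $J_1$, which is $\mathrm{U}(3)\subset\mathrm{Sp}(6,\RR)$, to further normalize $J_2$. By the normal form for compatible complex structures quoted immediately before the corollary, there exists a unitary change of coordinates (i.e. one preserving the standard hermitian structure, hence also the standard $J_1$ and the standard symplectic form) bringing $J_2$ into the stated block--diagonal shape with $0<\lambda_1\leq\lambda_2\leq\lambda_3\leq 1$. Under any element $A\in\mathrm{U}(3)$, the $(3,0)$-form $dz_1\wedge dz_2\wedge dz_3$ is multiplied by $\det(A)\in\mathrm{U}(1)$, so $\Omega_1$ retains the form $c_1\,dz_1\wedge dz_2\wedge dz_3$ with merely a phase absorbed into $c_1$.

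In the final coordinates, the $(1,0)$-forms for the diagonalized $J_2$ are precisely $\lambda_k dx_k+i\,dy_k$, so any $(3,0)$-form for $J_2$ is a complex multiple of $(\lambda_1 dx_1+i\,dy_1)\wedge(\lambda_2 dx_2+i\,dy_2)\wedge(\lambda_3 dx_3+i\,dy_3)$. Hence $\Omega_2=c_2\,(\lambda_1 dx_1+i\,dy_1)\wedge(\lambda_2 dx_2+i\,dy_2)\wedge(\lambda_3 dx_3+i\,dy_3)$ for some $c_2\in\CC^*$, and substitution yields the claimed expression. The ``hard part'' is really Proposition~\ref{PropU3PairGeometric} (with Lemma~\ref{lem_Qposdef} and the Lychagin--Rubtsov--Chekalov classification it depends on); the corollary itself amounts only to a careful bookkeeping of the two-step normalization $\mathrm{Sp}(6,\RR)\rightsquigarrow\mathrm{U}(3)\rightsquigarrow\{1\}$, with the only mild subtlety being to verify that the nonvanishing of $c_1,c_2$ is forced by $\Omega\in\mc U(V)$.
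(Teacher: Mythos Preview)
Your argument is correct and is exactly the one the paper intends: the corollary is stated without proof, immediately following Proposition~\ref{PropU3PairGeometric} and the unitary normal form for a compatible complex structure, and your two-step normalization ($\mathrm{Sp}(6,\RR)$ to standardize $J_1$, then the residual $\mathrm{U}(3)$ to diagonalize $J_2$) is precisely the bookkeeping the reader is expected to supply. Your justification of $c_1,c_2\in\CC^*$ via Lemma~\ref{lem_Qposdef} (applied to $\mathrm{Re}(\Omega)=\mathrm{Re}(\Omega_1)$ and, after multiplying $\Omega$ by $-i$, to $\mathrm{Im}(\Omega)=\mathrm{Im}(\Omega_2)$) is also the natural one.
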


\begin{prop}
Let $\Omega\in\mc U(3)$, then $\Omega\wedge\overline{\Omega}\neq 0$ and furthermore $i\Omega\wedge\overline{\Omega}$ is a positive (resp. negative) multiple of $\omega^3$ if $\Omega\in\mc U^+(3)$ (resp. $\Omega\in\mc U^-(3)$).
\end{prop}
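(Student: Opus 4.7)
The plan is to apply Proposition~\ref{PropU3PairGeometric} to write $\Omega = \mathrm{Re}(\Omega_1)+i\,\mathrm{Im}(\Omega_2)$ with each $\Omega_k$ of type $(3,0)$ with respect to some compatible complex structure $J_k$, and then to compute $i\,\Omega\wedge\overline{\Omega}$ relative to $J_1$. Since both $\mathrm{Re}(\Omega_1)$ and $\mathrm{Im}(\Omega_2)$ are real $3$-forms on a six-dimensional space, the identities $\eta\wedge\eta=0$ for odd-degree $\eta$ together with the anticommutation of $3$-forms immediately give $\Omega\wedge\overline{\Omega}=-2i\,\mathrm{Re}(\Omega_1)\wedge\mathrm{Im}(\Omega_2)$, so it suffices to show this real $6$-form is a positive (resp.\ negative) multiple of $\omega^3$ when $\Omega\in\mc U^+(V)$ (resp.\ $\mc U^-(V)$).

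The heart of the computation is a $J_1$-type decomposition. Writing $\Omega_2 = \mu\,\Omega_1 + \nu\,\overline{\Omega_1} + \Xi$ for $\mu,\nu\in\CC$ and $\Xi$ a primitive $(2,1)\oplus(1,2)$-form with respect to $J_1$, a short calculation gives
\[
(\Omega)^{3,0} = \tfrac{1+\mu-\bar\nu}{2}\,\Omega_1, \qquad (\Omega)^{0,3} = \tfrac{1+\nu-\bar\mu}{2}\,\overline{\Omega_1},
\]
and, crucially, that the $(2,1)$- and $(1,2)$-components of $\Omega$ are ``conjugate-linked,'' of the form $\psi/2$ and $-\overline{\psi}/2$ for some primitive $(2,1)$-form $\psi$. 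Expanding $i\,\Omega\wedge\overline\Omega = \sum_{p+q=3} i\,(\Omega)^{p,q}\wedge\overline{(\Omega)^{p,q}}$ and using the conjugate-linking, the mixed-type contributions cancel exactly against each other; combining the surviving $(3,0)$ and $(0,3)$ terms with the standard Calabi--Yau positivity $i\,\Omega_1\wedge\overline{\Omega_1}=P\omega^3$ (with $P>0$) yields
\[
i\,\Omega\wedge\overline\Omega \;=\; \tfrac{P}{4}\bigl(|1+\mu-\bar\nu|^2 - |1+\nu-\bar\mu|^2\bigr)\,\omega^3 \;=\; P\bigl(\mathrm{Re}(\mu)-\mathrm{Re}(\nu)\bigr)\,\omega^3.
\]

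To determine the sign I will invoke the polynomial argument from the proof of Proposition~\ref{PropUVTopology}. Applied to a standard real Lagrangian $n$-vector $w$ for $J_1$, the coefficients $a_3=(\Omega)^{3,0}(w)$ and $a_0=(\Omega)^{0,3}(w)$ of the associated polynomial $p_w$ satisfy $|a_3|>|a_0|$ precisely when all roots of $p_w$ lie in the open unit disk, i.e.\ when $\Omega\in\mc U^+(V)$. After factoring out $|\Omega_1(w)|$ this reads $|1+\mu-\bar\nu|>|1+\nu-\bar\mu|$, equivalent to $\mathrm{Re}(\mu)>\mathrm{Re}(\nu)$ by a direct comparison of squared moduli; the $\mc U^-$ case follows by reversing the inequality (or by complex conjugation). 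The main obstacle I anticipate is the $J_1$-type bookkeeping --- in particular, verifying the conjugate-linked form of the mixed-type components of $\Omega$, since without this property the $(2,1)$ and $(1,2)$ Hodge--Riemann contributions would fail to cancel and would contaminate the sign computation.
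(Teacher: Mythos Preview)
Your argument is correct and follows essentially the same route as the paper: invoke Proposition~\ref{PropU3PairGeometric}, decompose into $J_1$-types, and then use the root-location argument from the proof of Proposition~\ref{PropUVTopology} to compare $|a_3|$ with $|a_0|$, which translates into $|1+\mu-\bar\nu|>|1+\nu-\bar\mu|$, equivalently $\mathrm{Re}(\mu)>\mathrm{Re}(\nu)$. The one cosmetic difference is that the paper first rewrites $i\,\Omega\wedge\overline{\Omega}=-2i\,\alpha\wedge\beta$ (using that $\alpha=\mathrm{Re}(\Omega_1)$ is real and $\beta=i\,\mathrm{Im}(\Omega_2)$ is purely imaginary) and then observes that since $\alpha$ has only $(3,0)$ and $(0,3)$ parts, the mixed-type components of $\beta$ contribute nothing; your ``conjugate-linked'' cancellation of the $(2,1)$ and $(1,2)$ Hodge--Riemann terms accomplishes the same thing by a slightly longer but equally valid calculation.
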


\begin{proof}
According to Proposition~\ref{PropU3PairGeometric} we can write $\Omega=\alpha+\beta$ with
\begin{gather}
\alpha=c_1dZ+\bar{c}_1d\bar{Z} \\
\beta=c_2dZ-\bar{c}_2d\bar{Z}+\left(\text{components of type }(2,1)\text{ and }(1,2)\right)
\end{gather}
where $c_1,c_2\in\CC$ and $dZ:=dz_1\wedge dz_2\wedge dz_3$.
Suppose $\Omega\in\mc U^+(3)$, then the proof of Proposition~\ref{PropUVTopology} shows that $c_1+c_2\neq 0$.
In fact slightly more is shown, namely that $c_1+c_2$ and $\bar{c}_1-\bar{c}_2$ are the leading and constant coefficients of a polynomial with all roots in the open unit disk.
Thus
\begin{equation}
|c_1+c_2|>|c_1-c_2|
\end{equation}
which is equivalent to $\mathrm{Re}(c_1\bar{c}_2)>0$.
On the other hand,
\begin{equation}
i\Omega\wedge\bar{\Omega}=-2i\alpha\wedge\beta=4i\mathrm{Re}(c_1\bar{c}_2)dZ\wedge d\bar{Z}
\end{equation}
and 
\begin{equation}
\frac{\omega^3}{3!}=\frac{i}{8}dZ\wedge d\bar{Z}.
\end{equation}
This completes the proof since complex conjugation interchanges $\mc U^\pm(3)$ and the sign of $\Omega\wedge\bar{\Omega}$.
\end{proof}

%==============================================================================

\section{Global theory}
\label{sec_global}

In this section we define a global version of the structure considered in the previous section.
More precisely, we consider symplectic manifolds equipped with a closed and primitive complex--valued middle--degree form, $\Omega$, which satisfies the non--vanishing condition studied in the previous section pointwise.
When $M$ is compact we associate with such an $\Omega$ a volume and systole (Subsection~\ref{subsec_volsys}).
These definitions will be applied to the special case when $M$ is a rational symplectic torus in the next section.
Finally, we discuss a relaxed form of the closedness condition on $\Omega$.

\subsection{$\CC$--polarizations}

\begin{df}\label{def_Cpol}
Let $(M,\omega)$ be a compact symplectic manifold of dimension $2n$.
A \textbf{$\CC$--polarization} on $X$ is a complex-valued middle degree form 
\begin{equation}
\Omega\in \Gamma\left(M,\left(\Lambda^n T^*M\right)\otimes \CC\right)
\end{equation}
such that 
\begin{enumerate}
\item
$\Omega$ is primitive: $\Omega\wedge\omega=0$,
\item
$\Omega$ is closed: $d\Omega=0$,
\item
$\Omega_p\in \Lambda^nT^*_pM$ is non-vanishing on Lagrangian subspaces and 
positively oriented for each $p\in M$, i.e. $\Omega_p\in\mc U^+(T_pM)$.
\end{enumerate}
\end{df}

Although this definition considerably relaxes the Calabi--Yau condition, it is perhaps still too strict and we will discuss potential generalizations later.
Here are some examples of $\CC$--polarizations.
\begin{enumerate}
\item
Let $V$ be a symplectic vector space with lattice $\Lambda\in V$ and $\Omega\in\mc U^+(V)$, then we can consider $\Omega$ as a constant differential form on the symplectic torus $V/\Lambda$.
\item
If $(M,\omega,J)$ is a K\"ahler manifold with non-vanishing holomorphic volume form $\Omega$ (not necessarily satisfying the Calabi--Yau condition), then $\Omega$ defines a $\CC$-polarization.
One could attempt to generalize this example by requiring $J$ to be only a compatible almost complex structure, but in fact the condition $d\Omega=0$ for a non-vanishing $(n,0)$ form implies integrability of $J$ by the Newlander--Nirenberg theorem.
\end{enumerate}

Associated with a $\CC$--polarization $\Omega$ on $(M,\omega)$ is a \textbf{central charge}
\begin{equation}
Z:H_n(X;\ZZ)\to\CC,\qquad Z(L):=\int_L\Omega
\end{equation}
and a notion of \textbf{special Lagrangian submanifold} of phase $\phi\in\RR/\pi\ZZ$, i.e. a Lagrangian submanifold $L\subset M$ such that
\begin{equation}
\mathrm{Arg}\left(\Omega\mid_L\right)=\phi.
\end{equation}
Thus, one has all the data required for a stability condition on the Fukaya category, $\mc F(M)$, of $M$ if we define semistable objects to be those which can be represented by an immersed special Lagrangian submanifold.
This is not yet a precise definition, since one probably needs to allow singular Lagrangian submanifolds, and it is unknown which singularities to allow and how to include such objects in $\mc F(M)$.
Checking that this data satisfies the axioms of a stability condition is another matter.
At least one of the axioms, the support property, is easy.
Not coincidentally, the argument below served as motivation for Kontsevich--Soibelman to introduce the support property in~\cite{ks}, see also \cite{ks14}.
For the special case of surfaces it can be found in \cite{hkk}.

\begin{prop}[Support property]
\label{prop_support}
Let $(X,\omega)$ be a compact symplectic manifold, $\dim_\RR X=2n$, with $\CC$--polarization $\Omega$.
Then there is a norm on $H_n(X;\RR)$ and a constant $C>0$ such that
\begin{equation}
\|\gamma\|\leq C|Z(\gamma)|
\end{equation}
whenever $\gamma\in H_n(X,\ZZ)$ is the class of a compact special Lagrangian submanifold.
\end{prop}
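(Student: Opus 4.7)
The plan is to use the defining property of special Lagrangians, namely that their $\Omega$-phase is constant, to turn the central charge $|Z(L)|$ into a genuine geometric volume, and then bound any norm on $H_n(M;\RR)$ by that volume using duality.

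First I would fix a compatible almost complex structure $J$ on $(M,\omega)$ and the associated Riemannian metric $g=\omega(\cdot,J\cdot)$, which induces a volume density on every Lagrangian subspace of every tangent space. For each $p\in M$ and each Lagrangian subspace $L_p\subset T_pM$, both $|\Omega_p|_{L_p}|$ and the $g$-volume density on $L_p$ are positive elements of the same one-dimensional space. Because $\Omega_p\in\mc U^+(T_pM)$ for every $p$, and because the total space of the Lagrangian Grassmannian bundle $\mathrm{LGr}(TM)\to M$ is compact (since $M$ is), the ratio
\begin{equation*}
r(p,L_p):=\frac{|\Omega_p|_{L_p}|}{\mathrm{vol}_g(L_p)}
\end{equation*}
is a continuous positive function on a compact space, so it attains a uniform lower bound $c>0$.

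Next I would use the special Lagrangian condition. If $L\subset M$ is compact special Lagrangian of phase $\phi$, then $e^{-i\phi}\Omega|_L$ is a pointwise positive real multiple of the Riemannian volume form on $L$, so
\begin{equation*}
|Z(L)| = \left|\int_L \Omega\right| = \int_L \big|\Omega|_L\big|\ \geq\ c\cdot \mathrm{vol}_g(L).
\end{equation*}
This is the step that genuinely uses the special Lagrangian hypothesis: without constant phase, the integral could cancel and this inequality would fail.

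For the norm on $H_n(M;\RR)$, I would pick any basis $[\alpha_1],\ldots,[\alpha_k]$ of $H^n(M;\RR)$ represented by smooth closed forms and set $\|\gamma\|:=\max_i \bigl|\langle[\alpha_i],\gamma\rangle\bigr|$, which is a norm by Poincar\'e duality. For each $i$, compactness of $M$ gives a constant $M_i$ with $|\alpha_i|_{L_p}|\leq M_i\cdot\mathrm{vol}_g(L_p)$ for every Lagrangian subspace, so for a special Lagrangian representative $L$ of $\gamma$,
\begin{equation*}
\bigl|\langle[\alpha_i],\gamma\rangle\bigr| = \left|\int_L \alpha_i\right| \leq M_i\cdot \mathrm{vol}_g(L) \leq \frac{M_i}{c}\,|Z(\gamma)|.
\end{equation*}
Taking $C:=\max_i M_i/c$ yields $\|\gamma\|\leq C|Z(\gamma)|$.

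There is no real obstacle here: the argument is essentially a compactness/uniformization exercise, and the only substantive input is the identity $|Z(L)|=\mathrm{vol}_\Omega(L)$ for special Lagrangians together with the fact that $\Omega$ does not vanish on any Lagrangian subspace. The conceptual role of the non-vanishing condition $\Omega_p\in\mc U^+(T_pM)$ is precisely to give the uniform positive lower bound $c$, which is the only quantitative ingredient the proof needs.
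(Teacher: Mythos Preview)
Your proof is correct and follows essentially the same route as the paper: choose closed forms $\alpha_i$ representing a basis of $H^n$, use compactness to control $|\alpha_i|_L|$ uniformly against $|\Omega|_L|$, and then invoke the constant-phase identity $|Z(L)|=\int_L|\Omega|_L|$ for special Lagrangians. The only cosmetic difference is that the paper compares $|\alpha_i|_L|$ directly with $|\Omega|_L|$ (the ratio is bounded by compactness since $\Omega$ is non-vanishing on Lagrangians), whereas you insert the Riemannian volume density as an intermediary; this extra step is harmless but unnecessary.
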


\begin{proof}
Let $\alpha_1,\ldots,\alpha_m$ be $n$-forms on $X$ such that $[\alpha_1],\ldots,[\alpha_m]$ is a basis of $H^n_{dR}(X;\RR)$.
Consider the norm
\begin{equation}
\|\gamma\|=\sum_{i=1}^m\left|[\alpha_i](\gamma)\right|
\end{equation}
on $H_n(X;\RR)$.
Since $X$ is compact and $\Omega$ is non-vanishing on Lagrangian subspaces, there are $C_i>0$ such that $|\alpha_i|_L|\leq C_i|\Omega|_L|$ on any Lagrangian $L\subset T_pX$.
Set $C=\sum C_i$. 
If $L\subset X$ is a compact special Lagrangian submanifold then
\begin{equation}
\|[L]\|=\sum_{i=1}^m\left|\int_L\alpha_i\right|\leq \sum_{i=1}^m\int_L|\alpha_i|_L|\leq C\int_L|\Omega|_L|=C\left|Z(L)\right|
\end{equation}
where the last equality follows since $L$ is special.
\end{proof}

\subsection{Volume and systole}
\label{subsec_volsys}

Let $(M,\omega)$ be a compact symplectic manifold of dimension $2n$ with $\CC$-polarization $\Omega$.
We have two natural top-degree forms on $M$, namely
\begin{equation}
d\mathrm{Vol}_\omega:=\frac{\omega^n}{n!},\qquad d\mathrm{Vol}_\Omega:=(-1)^{\frac{n(n-1)}{2}}\left(\frac{i}{2}\right)^n\Omega\wedge\overline{\Omega}
\end{equation}
which coincide for a Calabi-Yau manifold.
For $n=1,2,3$ we know by the results of Section~\ref{sec_lin_theory} that
\begin{equation}
\frac{d\mathrm{Vol}_\Omega}{d\mathrm{Vol}_\omega}>0\qquad\text{ on }M
\end{equation}
though this could potentially fail for $n>3$.
In any case we can define the \textbf{volume}
\begin{equation}
\mathrm{Vol}_\Omega(M):=\int_Md\mathrm{Vol}_\Omega
\end{equation}
which is positive at least when $n\leq 3$.

Note that $\mathrm{Vol}_\Omega(M)$ depends only on the class of $\Omega$ in $H^n(M;\CC)$, i.e. on the central charge $Z$.
The relevant structure on $H_n(M;\RR)$ is the intersection pairing, which is non-degenerate by Poincar\'e duality.
The intersection pairing corresponds to the Euler pairing (Euler characteristic of $\mathrm{Hom}^\bullet(A,B)$) for the Fukaya category of $M$.
This fact was used by Fan--Kanazawa--Yau~\cite{fky}, see also \cite{fan_systole}, to assign a ``volume'' to a stability condition on a triangulated category. 

Define the \textbf{systole} to be volume of the smallest special Lagrangian submanifold in $M$, i.e.
\begin{equation}
\mathrm{Sys}_\Omega(M):=\inf\left\{\int_L\left|\Omega\right|,L\subset M \text{ compact special Lagrangian}\right\}
\end{equation}
then $\mathrm{Sys}_\Omega(M)>0$ by Proposition~\ref{prop_support}, though possibly $\mathrm{Sys}_\Omega(M)=+\infty$ if no compact special Lagrangian submanifolds exist.
In the case when $(M,\omega,\Omega)$ is Calabi-Yau, this type of systole was considered by Fan~\cite{fan_systole}.
There is a reasonable variant of the definition where one considers all compact Lagrangian submanifolds, not just the special ones. 
We will soon restrict to rational symplectic tori, and since these have a good supply of special Lagrangian subtori, these distinctions will not be important.

\subsection{Second order condition}
\label{subsec_ty}

We will discuss in this subsection a more general version of the notion of a $\CC$-polarization (Definition~\ref{def_Cpol}) suggested by a variant of Hodge theory for symplectic manifolds developed by Tseng--Yau~\cite{tseng_yau}.
To motivate this generalization we consider the famous Kodaira--Thurston nilmanifold example.
Topologically, this is an $S^1$ bundle over $T^3$ which can be constructed as a quotient $M=\RR^4/\ZZ^4$ where
$\ZZ^4$ acts by
\begin{equation}
\left(a,b,c,d\right)\cdot \left(x_1,x_2,x_3,x_4\right):=\left(x_1+a,x_2+b,x_3+c,x_4+d-bx_3\right).
\end{equation}
A frame of the cotangent bundle is given by
\begin{equation}
e_1:=dx_1,\qquad e_2:=dx_2,\qquad e_3:=dx_3,\qquad e_4:=dx_4+x_2dx_3
\end{equation}
in terms of which we can write a symplectic form
\begin{equation}
\omega:=e_1\wedge e_2+e_3\wedge e_4.
\end{equation}
The complex valued form
\begin{equation}
\Omega:=(e_1+ie_2)\wedge (e_3+ie_4)
\end{equation}
is of type $(2,0)$ with respect to a compatible almost complex structure.
However
\begin{equation}
d\Omega=-ie_1\wedge e_2\wedge e_3
\end{equation}
so $\Omega$ is not closed.

To continue we note that $\Omega$ is ``closed'' in a weaker sense:
\begin{equation}\label{ty_closed}
d\Lambda d\Omega=0
\end{equation}
where $\Lambda$ is the dual Lefschetz operator defined by contraction with $\omega$.
This condition ensures that
\begin{equation}
\int_L\Omega
\end{equation}
is invariant under Hamiltonian deformation of $L$.
Furthermore, $\Omega$ gives a class in 
\begin{equation}\label{coho_ph1}
PH_{dd^\Lambda}^n(M):=\frac{\mathrm{Ker}(dd^{\Lambda})\cap P^n(M)}{\mathrm{Im}(d)\cap P^n(M)}.
\end{equation}
defined by Tseng--Yau~\cite{tseng_yau}, where $d^\Lambda=d\Lambda-\Lambda d$ and $P^n(M)$ denotes primitive forms.
The cohomology \eqref{coho_ph1} is finite--dimensional (for compact $M$) and dual to
\begin{equation}\label{coho_ph2}
PH_{d+d^\Lambda}^n(M):=\frac{\mathrm{Ker}(d+d^{\Lambda})\cap P^n(M)}{\mathrm{Im}(dd^\Lambda)\cap P^n(M)}.
\end{equation}
which is the natural home for currents associated with closed Lagrangian submanifolds.

The upshot is that the condition $d\Omega=0$ can be replaced, for present purposes, by the weaker condition~\eqref{ty_closed}.
Imposing the pointwise condition $\Omega_p|_L\neq 0$ for any Lagrangian subspace $L\subset T_pM$ is justified at least in the case of rational symplectic tori, as will be shown in Proposition~\ref{prop_tori_support}.

%==============================================================================

\section{Rational symplectic tori}

In this section we give an equivalent definition of $\mc U(V)$ as a space of linear functional $\Lambda^n_\mathrm{pr}V\to\CC$ satisfying the support property with respect to homology classes of linear Lagrangian tori in a rational symplectic torus.
In Subsection~\ref{subsec_sysbound} we will establish a systolic bound on $\mc U_\mathrm{ag}(V)$, and in Subsection~\ref{subsec_vol} show that $\mc U(3)/\mathrm{Sp}(6,\ZZ)$ has infinite volume.

A \textbf{rational symplectic torus} is a symplectic torus of the form $(V/\Lambda,\omega)$ where $V$ is an even-dimensional real vector space, $\Lambda\subset V$ a lattice, and $\omega$ a constant symplectic form which takes rational values on $\Lambda\times \Lambda$ (i.e. lies in $H^2(V/\Lambda,\QQ)$).
Any such torus is a product of 2-tori with rational areas, see e.g. \cite[p.~204]{griffiths_harris}.
A symplectic torus which is a product of 2-tori of the same area is a \textbf{standard symplectic torus}.
A \textbf{linear Lagrangian subtorus} of $V/\Lambda$ is a compact Lagrangian torus which is the image of an affine Lagrangian subspace of $V$.
The rationality condition on the symplectic form $\omega$ ensures that $V/\Lambda$ contains many linear Lagrangian tori.
More precisely, the set of Lagrangian subspaces $L\subset V$ which project to compact Lagrangian tori in $V/\Lambda$ is dense in $\mathrm{LGr}(V)$, as follows from rationality of $\mathrm{LGr}(V)$.

To be more concrete, we may assume without loss of generality that $V=\RR^{2n}$, $\omega$ a rational multiple of the standard symplectic form, and 
\begin{equation}
\Lambda=\ZZ^{n}\oplus\ZZ d_1\oplus\ldots\oplus\ZZ d_n
\end{equation}
where $d_1,\ldots,d_n$ are positive integers with $d_1=1$ and $d_k\mid d_{k+1}$.
Any linear Lagrangian subtorus of $\RR^{2n}/\Lambda$ is then given as the image of a block matrix with $n\times n$ blocks $X,Y$ with entries in $\ZZ$ and the $k-th$ row of $Y$ divisible by $d_k$.

\subsection{Support property}

Let $\Gamma$ be a free abelian group of finite rank and $S\subset\Gamma$ a subset, then an additive map $Z:\Gamma\to\CC$ is said to satisfy the \textbf{support property} (with respect to $S$) if there is a norm $\|.\|$ on $\Gamma\otimes\RR$ and a constant $C>0$ such that
\begin{equation}
\|\gamma\|\leq C|Z(\gamma)| \qquad \text{ for all }\gamma\in S.
\end{equation}
Note that if $Z$ satisfies this property then $Z(S)$ is a discrete subset of $\CC$.
The support property was introduced by Kontsevich--Soibelman in the definition of \textit{stability data} and \textit{stability structure} in \cite{ks}, and is usually added to Bridgeland's axioms of a stability condition \cite{bridgeland07}, where $S$ is the set of classes of semistable objects.

\begin{prop}\label{prop_tori_support}
Let $(V/\Lambda,\omega)$ be a rational symplectic torus of dimension $2n$ and $\Omega\in\left(\Lambda^n_\mathrm{pr}V^\vee\right)\otimes\CC$.
Then $\Omega\in\mathcal U(V)$ if and only if $\Omega$, considered as a functional $\Lambda^n_\mathrm{pr}V\to\CC$, satisfies the support property with respect to classes of linear Lagrangian subtori in $V/\Lambda$.
\end{prop}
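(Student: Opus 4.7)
The plan is to treat the two implications separately: the ``only if'' direction reduces to compactness of $\mathrm{LGr}(V)$, and the ``if'' direction to the density of Lagrangian subspaces that project to compact subtori, noted just before the proposition.

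First I would identify the relevant classes. A linear Lagrangian subtorus of $V/\Lambda$ is $L/(L\cap\Lambda)$ for a Lagrangian $L\subset V$ with $L\cap\Lambda$ a full lattice in $L$; its fundamental class is the decomposable primitive $n$-vector $v_1\wedge\cdots\wedge v_n\in\Lambda^n\Lambda\subset\Lambda^n_\mathrm{pr} V$ for $v_1,\ldots,v_n$ a $\ZZ$-basis of $L\cap\Lambda$. For the forward direction, fix an auxiliary inner product on $V$ (say from a compatible Hermitian metric), inducing norms on $\Lambda^n V$ and on each $\Lambda^n L^\vee\otimes\CC$. The function $f:\mathrm{LGr}(V)\to\RR$, $L\mapsto \|\Omega|_L\|$, is continuous and, under the assumption $\Omega\in\mc U(V)$, strictly positive, hence bounded below by some $c>0$ on the compact space $\mathrm{LGr}(V)$. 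For any decomposable $v\in\Lambda^n_\mathrm{pr} V$ representing $L$ one has $|\Omega(v)|=f(L)\|v\|\geq c\|v\|$, and restricting to classes of linear Lagrangian subtori yields the support property with $C:=1/c$.

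Conversely, assume $\|\gamma\|\leq C|\Omega(\gamma)|$ for every class $\gamma$ of linear Lagrangian subtorus. I would run the same scaling in reverse: writing $\gamma=\lambda v$ for $v$ a unit decomposable representative and $\lambda>0$, the inequality rescales to $|\Omega(v)|\geq 1/C$. Hence this lower bound holds on the set of unit decomposable primitive $n$-vectors that represent rational Lagrangians. These are dense in the set of all unit decomposable primitive $n$-vectors, because rational Lagrangians are dense in $\mathrm{LGr}(V)$, as observed in the paragraph preceding the proposition (using the explicit block-matrix description). By continuity of the map $v\mapsto\Omega(v)$ the bound passes to the limit, giving $|\Omega(v)|\geq 1/C>0$ on every unit decomposable primitive $v$, which is precisely the assertion that $\Omega|_L\neq 0$ for every Lagrangian $L$, i.e.\ $\Omega\in\mc U(V)$.

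The only real step to monitor is the density of rational Lagrangians together with the continuity of the map sending a unit decomposable primitive $n$-vector to the Lagrangian it represents; once these are in place, the limit passage is automatic. Equivalence of norms in finite dimensions lets me freely interchange the auxiliary norm on $\Lambda^n_\mathrm{pr} V$ used in the forward direction with any norm chosen for the support property, so no real obstacle arises beyond fixing such conventions.
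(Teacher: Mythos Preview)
Your proof is correct and follows the same line as the paper's: for the forward direction you extract a uniform lower bound from compactness of $\mathrm{LGr}(V)$ (the paper simply cites the general support property, Proposition~\ref{prop_support}, whose proof specializes to exactly your argument), and for the converse you pass the lower bound $|\Omega(v)|\geq 1/C$ from the dense set of rational Lagrangians to all of $\mathrm{LGr}(V)$ by continuity, just as the paper does. There is no substantive difference in strategy.
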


\begin{proof}
One implication is a special case of Proposition~\ref{prop_support}.
Suppose instead that the support property holds for some $\Omega\in\left(\Lambda^n_\mathrm{pr}V^\vee\right)\otimes\CC$.
Thus there is a $C>0$ such that
\begin{equation} \label{supp_prop_quot}
\frac{1}{C} \leq \frac{|Z(\gamma)|}{\|\gamma\|}
\end{equation} 
for $\gamma\in\Lambda^n_\mathrm{pr}\Lambda=:\Gamma$ the class of a linear Lagrangian subtorus.
The right hand side of \eqref{supp_prop_quot} is a well defined continuous function on $\mathrm{LGr}(V)$, and since it is bounded below on a dense subset, it must be non-vanishing everywhere.
This proves that $\Omega$ does not vanish on any Lagrangian subspace in $V$.
\end{proof}

The previous proposition provides good evidence that
\begin{equation}
\mathrm{Stab}(\mc F(V/\Lambda))=\widetilde{\mc U^+(V)}
\end{equation}
where the left hand side is the space of stability conditions on the Fukaya category (over the Novikov field) of a rational symplectic torus, and the right hand side is the universal cover of the $K(\ZZ,1)$-space $\mc U^+(V)$.
A similar conjecture was suggested by Kontsevich~\cite{kontsevich_lille}.

\subsection{Systolic bound}
\label{subsec_sysbound}

Let $(M,\omega)$ be a compact symplectic manifold.
A \textbf{systolic bound} is an inequality of the form
\begin{equation}\label{sys_bound}
\left(\mathrm{Sys}_{\Omega}(M)\right)^2\leq C\mathrm{Vol}_\Omega(M)
\end{equation}
where the constant $C>0$ depends only on $(M,\omega)$ and not on $\Omega$, which is allowed to vary in some set of $\CC$-polarizations (to be specified).
Intuitively it means that if the volume is fixed, then the smallest special Lagrangian cannot be too big.
Thus, in order to prove a systolic bound one needs to show existence of a sufficiently small special Lagrangian.

\begin{theorem}\label{thm_torus_sys}
Let $(V/\Lambda,\omega)$ be a rational symplectic torus, then a systolic bound holds for $\Omega\in\mc U_\mathrm{geom}(V)$.
\end{theorem}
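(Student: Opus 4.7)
The plan is to exploit the fact that for $\Omega \in \mc U_{\mathrm{geom}}(V)$ the uniquely determined compatible complex structure $J$ turns $V/\Lambda$ into a flat K\"ahler torus, with $\Omega = \lambda\,dz_1\wedge\cdots\wedge dz_n$ in suitable unitary coordinates. Since $\Omega$ is translation-invariant, every linear Lagrangian subtorus $T_L \subset V/\Lambda$ is automatically special, and because any $(n,0)$-form has constant norm on Lagrangian subspaces one has $|Z([T_L])| = |\lambda|\cdot\mathrm{vol}_{g_J}(T_L)$. Combined with $\mathrm{Vol}_\Omega(V/\Lambda) = |\lambda|^2 V_0$, where $V_0 := \int_{V/\Lambda}\omega^n/n!$ depends only on $(V/\Lambda,\omega)$, this reduces the theorem to the following Lagrangian Minkowski-type statement: for every compatible $J$ on $(V/\Lambda,\omega)$ there exists a linear Lagrangian subtorus $T_L$ with $\mathrm{vol}_{g_J}(T_L)^2$ bounded by a constant depending only on $V_0$ and $n$.

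I would prove this by induction on $n$, the case $n=1$ being the classical Loewner inequality for flat $2$-tori. For the inductive step, use Minkowski's theorem to pick a primitive shortest vector $v_1\in\Lambda$, so that $|v_1|_{g_J}^{2n}\leq c_n V_0$. The symplectic perpendicular hyperplane $v_1^{\perp_\omega}$ is rational and contains $v_1$; the symplectic reduction $V':=v_1^{\perp_\omega}/\RR v_1$ inherits the lattice $\Lambda':=(\Lambda\cap v_1^{\perp_\omega})/\ZZ v_1$, a rational symplectic form $\omega'$, a compatible complex structure $J'$, and satisfies $\mathrm{Vol}_{g'}(V'/\Lambda')\leq V_0$. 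By the inductive hypothesis applied to $(V'/\Lambda',\omega',J')$, there is a Lagrangian subtorus $\bar T_{\bar L}\subset V'/\Lambda'$ of controlled $g'$-volume. Its preimage under $\pi\colon v_1^{\perp_\omega}\to V'$ is a Lagrangian subspace $L\subset V$ containing $v_1$, and the exact sequence $0\to\ZZ v_1\to\Lambda\cap L\to\Lambda'\cap\bar L\to 0$ shows $\Lambda\cap L$ has rank $n$. The $g_J$-orthogonal decomposition $L=\RR v_1\oplus L'$ with $L':=L\cap v_1^{\perp_{g_J}}$ mapping isomorphically onto $\bar L$ then yields $\mathrm{vol}_{g_J}(T_L) = |v_1|_{g_J}\cdot\mathrm{vol}_{g'}(\bar T_{\bar L})$, and combining with the Minkowski bound on $|v_1|_{g_J}$ closes the induction.

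The main technical obstacle is ensuring that the reduction step genuinely produces a compatible K\"ahler structure on $V'$ whose quotient metric agrees with the K\"ahler metric of $J'$. The key identity $J(v_1^{\perp_\omega}) = v_1^{\perp_{g_J}}$, a consequence of the relation $\omega(a,b) = g_J(Ja,b)$, implies that $W := v_1^{\perp_\omega}\cap v_1^{\perp_{g_J}}$ is a $J$-invariant symplectic subspace of $V$ and provides an isometric identification $W\cong V'$ that carries the restricted K\"ahler structure to the quotient one. Given this, both the existence of $J'$ and the orthogonal splitting used in the volume formula follow from elementary linear algebra, and the induction closes with a constant depending only on $V_0$ and $n$.
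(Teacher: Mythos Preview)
Your argument is correct and takes a genuinely different route from the paper. The paper works in explicit Siegel coordinates: after normalizing so that the minimal linear Lagrangian is the standard $\RR^n/\ZZ^n$ (giving $|\det(M+ZN)|\geq 1$ for all admissible integer matrices), it applies Minkowski \emph{twice in a non-inductive way}: once to $(\ZZ^n,Y)$ to bound $y_{11}$ above by $C_n(\det Y)^{1/n}$, and once to the rank-two lattice $\ZZ\oplus z_{11}\ZZ\subset\CC$ (all of whose nonzero points are central charges of Lagrangian subtori) to bound $y_{11}$ below. Your proof instead runs an induction on $n$ via symplectic reduction along a Minkowski-shortest lattice vector, which is conceptually cleaner and directly generalizes the Loewner/Minkowski picture from $n=1$. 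One small caution: your claim that the constant depends only on $V_0$ and $n$, and that $\mathrm{Vol}_{g'}(V'/\Lambda')\leq V_0$, is cleanest if you first pass to the standard torus (as the paper does), so that the reduction stays inside the class of unimodular symplectic lattices with $V_0'=V_0=1$; otherwise the recursion produces a bound like $c_n V_0^{H_n}$ rather than $c_n V_0$, which still suffices for the theorem but is not quite what you asserted. A trade-off worth noting: the paper's argument exhibits an explicit rank-two sublattice of central charges, which it immediately reuses to extend the bound from $\mc U_{\mathrm{geom}}$ to $\mc U_{\mathrm{ag}}$ (odd $n$ case of Theorem~\ref{thm_torus_sys_ag}); your inductive argument does not directly hand you that structure.
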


The proof will be based on Minkowski's lattice point theorem, which states that if $\Lambda\subset\RR^n$ is a lattice, then
\begin{equation}
\min_{x\in\Lambda\setminus\{0\}} \|x\| \leq C_n\sqrt[n]{\mathrm{Vol}(\RR^n/\Lambda)}
\end{equation}
for some constant $C_n>0$ depending only on $n$, not $\Lambda$.
In fact, the $n=1$ case of Theorem~\ref{thm_torus_sys} is just the $n=2$ case of the lattice point theorem.

\begin{proof}
It suffices to consider the standard symplectic torus $\RR^{2n}/\ZZ^{2n}$, since passing to a finite cover can only scale systole and volume by a fixed finite amount.

We will make use of Siegel's description of the space of compatible complex structures on $V$, see~\cite{siegel39}.
Recall that the Siegel space is the set $\mathcal S_n$ of matrices $Z=X+iY\in\mathrm{Mat}(n\times n,\CC)$ with $X,Y$ symmetric and $Y$ positive definite.
The group $\mathrm{Sp}(2n,\RR)$ acts transitively on $\mathcal S_n$ via
\begin{equation}
\begin{pmatrix} A & B \\ C & D \end{pmatrix}\cdot Z:=(AZ+B)(CZ+D)^{-1}.
\end{equation}
The point $X+iY\in\mathcal S_n$ corresponds to the K\"ahler torus $\RR^{2n}/\Lambda$ with
\begin{equation}\label{siegel_torus}
\Lambda=\begin{pmatrix} 1 & X \\ 0 & Y \end{pmatrix}\ZZ^{2n},\qquad
J=\begin{pmatrix} 0 & -1 \\ 1 & 0 \end{pmatrix},\qquad
\omega=\begin{pmatrix} 0 & Y^{-1} \\ -Y^{-1} & 0 \end{pmatrix}
\end{equation}
which is isomorphic to the standard torus $\RR^{2n}/\ZZ^{2n}$ with complex structure
\begin{equation}
J'=\begin{pmatrix} -XY^{-1} & -Y-XY^{-1}X \\ Y^{-1} & Y^{-1}X \end{pmatrix}.
\end{equation}

Fix a compatible complex structure on $V$, i.e. a point $Z=X+iY\in\mathcal S_n$, and
let $\Omega=dz_1\wedge\ldots\wedge dz_n$, without loss of generality (since the inequality \eqref{sys_bound} is homogeneous in $\Omega$).
For very general reasons (Proposition~\ref{prop_support}) there must a linear Lagrangian subtorus in $\RR^{2n}/\Lambda$ with minimal volume among such.
Applying some element of $\mathrm{Sp}(2n,\ZZ)$ we may assume that this subtorus is $(\RR^n/\ZZ^n)\times\{0\}\subset \RR^{2n}/\Lambda$.
More explicitly, this means that
\begin{equation}\label{sys_condition_mat}
|\det(M+ZN)|\geq 1
\end{equation}
for any pair of matrices $M,N\in\mathrm{Mat}(n\times n,\ZZ)$ such that $M^TN=N^TM$ and the rank of the block matrix with blocks $M$ and $N$ on top of each other is $n$.

Applying Minkowski's theorem to the lattice $\ZZ^n$ with the positive definite quadratic form $Y$, we can assume, after performing some change of coordinates given by a block--diagonal element of $\mathrm{Sp}(2n,\ZZ)$, that
\begin{equation}\label{Y_minkowski_bound}
y_{11}\leq C_n\sqrt[n]{\det(Y)}
\end{equation}
where $Y=(y_{ij})_{1\leq i,j\leq n}$.

Given $p,q\in\ZZ$, not both zero, let
\begin{equation}
M=\mathrm{diag}(p,1\ldots,1),\qquad N=\mathrm{diag}(q,0,\ldots,0)
\end{equation}
then $|\det(M+ZN)|=|p+z_{11}q|\geq 1$ by \eqref{sys_condition_mat}. 
Applying Minkowski's theorem to $\CC$ with the standard metric and the lattice $\ZZ\oplus z_{11}\ZZ $ we find
\begin{equation}
y_{11}\geq\frac{1}{C_1}.
\end{equation}
Combining the above we get
\begin{equation}
\left(\mathrm{Sys}_\Omega(\RR^{2n}/\Lambda)\right)^2 \leq 1 \leq C\det(Y)=C\mathrm{Vol}_\Omega(\RR^{2n}/\Lambda)
\end{equation}
for $C:=(C_1C_n)^n$.
\end{proof}

The previous theorem can be strengthened to allow also those $\Omega\in\mc U^+(V)$ which are in the $\mathrm{GL}^+(2,\RR)$--orbit of an $(n,0)$ form.

\begin{theorem}\label{thm_torus_sys_ag}
Let $(V/\Lambda,\omega)$ be a rational symplectic torus, then a systolic bound holds for $\Omega\in\mc U_\mathrm{ag}(V)$.
\end{theorem}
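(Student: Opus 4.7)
The plan is to push the proof of Theorem~\ref{thm_torus_sys} through with a sharpened second Minkowski step that tracks the almost-geometric decomposition of $\Omega$. Reduce to the standard symplectic torus $\RR^{2n}/\ZZ^{2n}$. By the definition of $\mc U_\mathrm{ag}$, fix a compatible complex structure --- a Siegel point $Z = X + iY \in \mathcal S_n$ --- with respect to which $\Omega = c_1\Psi + c_2\bar\Psi$, $\Psi = dz_1\wedge\cdots\wedge dz_n$, $|c_1| > |c_2|$; rephase $\Psi$ so that $c_1 > c_2 \geq 0$ are real (the case $c_2 = 0$ reduces to Theorem~\ref{thm_torus_sys}). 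The inequality is scale-invariant in $\Omega$, so rescale so that the minimum of $|Z_\Omega(L)|$ over linear Lagrangian subtori equals $1$ (attained by the support property, Proposition~\ref{prop_tori_support}), and apply an element of $\mathrm{Sp}(2n,\ZZ)$ to make this minimum be achieved at $L^* = (\RR^n/\ZZ^n) \times \{0\}$, forcing $c_1 + c_2 = Z_\Omega(L^*) = 1$.

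The crucial new ingredient is the following refinement of the second Minkowski step. For the two-parameter family of linear Lagrangian subtori with $(M, N) = (\mathrm{diag}(p, 1, \ldots, 1), \mathrm{diag}(q, 0, \ldots, 0))$, a direct computation using $c_1 + c_2 = 1$ gives
\[
Z_\Omega(L_{p,q}) = (p + x_{11}q) + i(c_1 - c_2)y_{11}q.
\]
Applying $|Z_\Omega(L_{p,1})| \geq 1$ with $p \in \ZZ$ chosen so that $|p + x_{11}| \leq 1/2$ forces $(c_1-c_2)^2 y_{11}^2 \geq 3/4$, i.e. $y_{11} \geq \sqrt{3}/(2(c_1 - c_2))$. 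This sharpens the bound $y_{11} \geq 1/C_1$ in the proof of Theorem~\ref{thm_torus_sys} by the factor $1/(c_1-c_2)$ which is sensitive to degeneration.

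The rest of the argument then mirrors Theorem~\ref{thm_torus_sys}. A block-diagonal element of $\mathrm{Sp}(2n,\ZZ)$ together with Minkowski's theorem applied to $\ZZ^n$ with form $Y$ yields $y_{11} \leq C_n\det(Y)^{1/n}$, hence $\det(Y) \geq K_n(c_1 - c_2)^{-n}$ with $K_n = (\sqrt{3}/(2C_n))^n$. From $\Omega\wedge\bar\Omega = (|c_1|^2 + (-1)^n|c_2|^2)\Psi\wedge\bar\Psi$ one has $\mathrm{Vol}_\Omega = (c_1^2 + (-1)^n c_2^2)\det(Y)$. For $n$ odd this equals $(c_1-c_2)\det(Y) \geq K_n(c_1-c_2)^{1-n}$; for $n$ even, $c_1^2 + c_2^2 \geq 1/2$ (by AM-GM on $c_1+c_2 = 1$), so $\mathrm{Vol}_\Omega \geq K_n(c_1-c_2)^{-n}/2$. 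Since $c_1 - c_2 \leq c_1 + c_2 = 1$, the relevant powers of $(c_1-c_2)$ are each $\geq 1$, so $\mathrm{Vol}_\Omega$ is bounded below by a positive constant depending only on $n$. Combined with $\mathrm{Sys}_\Omega \leq 1$, this yields the desired systolic bound.

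The main obstacle is designing the refined Minkowski step so that the $(c_1-c_2)^{-1}$ growth in the lower bound on $y_{11}$ exactly compensates the vanishing of the volume coefficient $c_1^2-c_2^2$ (for $n$ odd) as $\Omega$ approaches the boundary $|c_1|=|c_2|$ of $\mc U_\mathrm{ag}$; what makes this work is the specific structure that the imaginary part of $Z_\Omega(L_{p,q})$ is exactly $(c_1 - c_2) y_{11} q$.
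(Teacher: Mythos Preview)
Your argument is correct and follows a genuinely different route from the paper. One minor expository wrinkle: the $\mathrm{Sp}(2n,\ZZ)$-move sending the systole minimizer to $L^*$ changes the Siegel point and hence $\Psi$ (by a nonzero complex scalar), so the rephasing needed to make $c_1,c_2$ real non-negative --- which in fact requires rephasing both $\Psi$ \emph{and} $\Omega$, not $\Psi$ alone --- should come \emph{after} the $\mathrm{Sp}(2n,\ZZ)$-moves; likewise the block-diagonal Minkowski step should use $A\in\mathrm{SL}(n,\ZZ)$ so that $\Psi$ is preserved. With that reordering your computations go through as written.

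The paper's proof instead splits on the parity of $n$. For $n$ even it simply writes $\Omega'=\Omega+\epsilon\bar\Omega$ with $\Omega$ geometric and $|\epsilon|<1$, notes $\mathrm{Sys}_{\Omega'}\le 2\,\mathrm{Sys}_\Omega$ and $\mathrm{Vol}_{\Omega'}=(1+|\epsilon|^2)\mathrm{Vol}_\Omega\ge\mathrm{Vol}_\Omega$, and invokes Theorem~\ref{thm_torus_sys} as a black box. For $n$ odd it exploits the $\mathrm{SL}(2,\RR)$-invariance of $\mathrm{Vol}_\Omega$: the lattice $\ZZ\oplus z_{11}\ZZ$ of $\Psi$-central charges produced in the proof of Theorem~\ref{thm_torus_sys} has area controlled by $\mathrm{Vol}_\Psi$, and since an element of $\mathrm{SL}(2,\RR)$ preserves both that lattice area and the volume, Minkowski in the plane bounds $\mathrm{Sys}_{g\Psi}$ directly. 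Your approach trades this parity split and the invariance argument for a single direct computation: by normalizing with the $\Omega$-systole rather than the $\Psi$-systole you extract the refined bound $y_{11}\gtrsim(c_1-c_2)^{-1}$, whose growth you correctly identify as exactly what is needed to offset the factor $c_1^2\pm c_2^2$ in the volume. The paper's route is shorter in each parity; yours is uniform in $n$ and makes the degeneration mechanism transparent.
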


\begin{proof}
We will deal with the cases of odd and even $n$ separately, where $2n=\dim V$. 
First suppose that $n$ is even.
Any $\Omega'$ which is in the $\mathrm{GL}^+(2,\RR)$ orbit of an element in $\mc U_\mathrm{geom}(V)$ can be written in the form
\begin{equation}
\Omega'=\Omega+\epsilon\overline{\Omega}, \qquad \epsilon\in\CC, |\epsilon|<1,\Omega\in\mc U^+_\mathrm{geom}(V).
\end{equation}
Then we have
\begin{equation}
\mathrm{Sys}_{\Omega'}(T^{2n})\leq 2\mathrm{Sys}_\Omega(T^{2n})
\end{equation}
and
\begin{equation}
\Omega'\wedge\overline{\Omega'}=\left(1+|\epsilon|^2\right)\Omega\wedge\overline{\Omega}
\end{equation}
since $n$ is even, thus $\mathrm{Vol}_{\Omega'}(T^{2n})\geq \mathrm{Vol}_\Omega(T^{2n})$.
Combining this with Theorem~\ref{thm_torus_sys} gives
\begin{equation}
\left(\mathrm{Sys}_{\Omega'}(T^{2n})\right)^2\leq 4C\mathrm{Vol}_{\Omega'}(T^{2n})
\end{equation}
for any $\Omega'\in\mc U_\mathrm{ag}(V)$.

Assume now that $n$ is odd, so that $\mathrm{Vol}_{\Omega}(T^{2n})$ is invariant under the $\mathrm{SL}(2,\RR)$--action.
The proof of Theorem~\ref{thm_torus_sys} showed that there is a lattice $\ZZ\oplus\tau\ZZ\subset \CC$ (where $\tau:=z_{11}$) all of whose non-zero elements are central charges of (possibly immersed) linear Lagrangian tori in $T^{2n}$, and
furthermore the area of this lattice, $\mathrm{Im}(\tau)$, has an absolute upper bound for fixed $\mathrm{Vol}_{\Omega}(T^{2n})$.
This property persists when applying some element of $\mathrm{SL}(2,\RR)$ to $\Omega$, so again by Minkowski's theorem in the plane we get an upper bound on the smallest volume of a linear Lagrangian subtorus. 
\end{proof}

As a corollary we find that a systolic bound holds on all of $\mc U^+(2)$.

\begin{q}
Does a systolic bound hold on $\mc U^+(3)$?
\end{q}

\subsection{Volume of moduli space}
\label{subsec_vol}

Suppose $V$ is a symplectic vector space of dimension $2n$ with $n\geq 1$ odd, then $\left(\Lambda^n_{\mathrm{pr}}V^\vee\right)\otimes \CC$ has a natural symplectic structure
\begin{equation}
(\Omega_1,\Omega_2)\mapsto\mathrm{Im}\left(\Omega_1\wedge\Omega_2\right)\left(\frac{\omega^n}{n!}\right)^{-1}
\end{equation}
which is invariant under the action of $\mathrm{Sp}(V)$ and the subgroup 
\begin{equation}\label{TeichFlow_subgroup}
\left\{\begin{pmatrix} e^t & 0 \\ 0 & e^{-t} \end{pmatrix} \mid t\in\RR \right\}\subset\mathrm{SL}(2,\RR).
\end{equation}
The volume form associated with the symplectic structure is invariant under the full group $\mathrm{SL}(2,\RR)$.
By restriction, the same applies to $\mc U^+(V)$, which contains the hypersurface
\begin{equation}
\mc U_1^+(V):=\left\{\Omega\in\mc U^+(\RR^{2n})\mid \frac{\omega^n}{n!}=(-1)^{\frac{n(n-1)}{2}}\left(\frac{i}{2}\right)^n\Omega\wedge\overline{\Omega}\right\}
\end{equation}
of contact type. (The Euler vector field serves as a Liouville vector field.)
The actions of $\mathrm{Sp}(V)$ and $\mathrm{SL}(2,\mathbb R)$ restrict to $\mc U_1^+(V)$.

Let $n\geq 1$ be odd as before and let $T^{2n}=\RR^{2n}/\ZZ^{2n}$ be the standard symplectic torus and consider the orbifold
\begin{equation}
\mathcal M_1(n):=\mc U_1^+(\RR^{2n})/\mathrm{Sp}(2n,\ZZ).
\end{equation}
which is the  hypersurface in $\mc U^+(\RR^{2n})/\mathrm{Sp}(2n,\ZZ)$ where $\mathrm{Vol}_\Omega(T^{2n})=1$.
By the above discussion we have a natural $\mathrm{SL}(2,\RR)$--invariant measure on $\mc M_1(n)$.
Note that $\mc M_1(1)=\mathrm{SL}(2,\RR)/\mathrm{SL}(2,\ZZ)$ is the moduli space of flat structures on the 2-torus with normalized area and preferred choice of horizontal direction, and 
\begin{equation}
\mathrm{Vol}\left(\mc M_1(1)\right)<\infty
\end{equation}
which is a very special case of a theorem due to Masur~\cite{masur82} and Veech~\cite{veech82} on the finiteness of the volumes of moduli spaces of flat surfaces.

In contrast, the volume turns out to be infinite for $n=3$.

\begin{theorem}\label{thm_vol_inf}
$\mathrm{Vol}\left(\mc M_1(3)\right)=\infty$
\end{theorem}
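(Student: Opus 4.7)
The plan is to produce an infinite--volume region inside $\mc M_1(3)$ by exploiting the ``pair of complex structures'' description of $\mc U^+(\RR^6)$ coming from Proposition~\ref{PropU3PairGeometric}. That result expresses every $\Omega \in \mc U^+(\RR^6)$ as $\Omega = \mathrm{Re}(\Omega_1) + i\,\mathrm{Im}(\Omega_2)$, with $\Omega_k$ of type $(3,0)$ for some compatible complex structure $J_k \in \mathcal S_3$. Moreover the pair $(J_1,J_2)$ is uniquely recovered from $\Omega$ via the Lychagin--Rubtsov--Chekalov invariant $q_\alpha$ applied to $\mathrm{Re}(\Omega)$ and $\mathrm{Im}(\Omega)$, and $\Omega$ is then reconstructed from $(J_1,J_2)$ together with two $\CC^\ast$--scales. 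This produces a finite--to--one map
\[
\mc U^+(\RR^6) \longrightarrow \mathcal S_3 \times \mathcal S_3
\]
whose fibers are (up to a finite cover) $\CC^\ast \times \CC^\ast$, and which is equivariant for the \emph{diagonal} action of $\mathrm{Sp}(6,\RR)$ on the target, since a symplectic transformation conjugates each of $J_1,J_2$ separately.

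Next I would show that the natural $\mathrm{Sp}(6,\RR)$--invariant symplectic volume on $\mc U^+(\RR^6)$, once restricted to $\mc U_1^+$ and descended to the $\mathrm{Sp}(6,\ZZ)$--quotient, dominates on a suitable open subset a product measure of the form $d\mu_{\mathcal S_3}(J_1) \otimes d\mu_{\mathcal S_3}(J_2) \otimes d\lambda$, where $d\mu_{\mathcal S_3}$ is the $\mathrm{Sp}(6,\RR)$--invariant measure on the Siegel upper half space and $d\lambda$ is a measure of finite total mass on a bounded slice of the normalized scale parameters. Uniqueness of the invariant measure on the homogeneous factor $\mathcal S_3$, together with $\mathrm{Sp}(6,\RR)$--equivariance of the parametrization, forces the pullback to split in this way up to a smooth positive density depending only on the scale coordinates.

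The proof then reduces to computing the volume of $(\mathcal S_3 \times \mathcal S_3)/\mathrm{Sp}(6,\ZZ)$ for the diagonal action. A fundamental domain for this action is $\mathcal F \times \mathcal S_3$, where $\mathcal F \subset \mathcal S_3$ is any fundamental domain for $\mathrm{Sp}(6,\ZZ)$ on $\mathcal S_3$ alone. By the classical Minkowski--Siegel reduction theory $\mathrm{Vol}(\mathcal F) < \infty$, but $\mathrm{Vol}(\mathcal S_3) = \infty$, since $\mathcal S_3 = \mathrm{Sp}(6,\RR)/\mathrm{U}(3)$ is the quotient of a noncompact simple Lie group of infinite Haar volume by a compact maximal subgroup. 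Hence the product measure of $\mathcal F \times \mathcal S_3$ is infinite, and combined with the measure comparison above yields $\mathrm{Vol}(\mc M_1(3)) = \infty$.

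The main obstacle is the measure comparison step. One must check that the pullback of the invariant symplectic volume does not vanish or decay too rapidly in the $\CC^\ast$ fibers in a way that could cancel the infinite volume coming from the free $\mathcal S_3$ factor. A direct calculation of the Jacobian of the parametrization in Siegel coordinates at a suitable non--geometric reference point, combined with the commuting $\mathrm{SL}(2,\RR)$--action to transport the bound to a large region of the parameter space, should suffice. The finite--to--one ambiguities inherent in the pair--of--complex--structures parametrization only change the total volume by a finite multiplicative factor and so are harmless.
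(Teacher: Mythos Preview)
Your measure--splitting step is where the argument breaks. Diagonal $\mathrm{Sp}(6,\RR)$--invariance of a measure on $\mathcal S_3\times\mathcal S_3$ does \emph{not} force it to be a positive density times the product of invariant measures: writing $\mathcal S_3\times\mathcal S_3\cong G\times_K(G/K)$ via $(gK,hK)\mapsto[g,g^{-1}hK]$, a diagonally $G$--invariant measure corresponds to an arbitrary $K$--invariant measure on the fibre $G/K$, which can perfectly well have finite total mass or decay at infinity. So the reduction to ``$\mathrm{Vol}(\mathcal F)\cdot\mathrm{Vol}(\mathcal S_3)=\infty$'' is not justified, and the density you suppress can depend on the relative position of $(J_1,J_2)$, not only on the scales. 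Your proposed fix, transporting a local Jacobian bound with the commuting $\mathrm{SL}(2,\RR)$, cannot reach a large region of $\mathcal S_3\times\mathcal S_3$ either: in your own coordinates the subgroup $\mathrm{diag}(e^t,e^{-t})$ sends $\mathrm{Re}(\Omega_1)+i\,\mathrm{Im}(\Omega_2)$ to $\mathrm{Re}(e^t\Omega_1)+i\,\mathrm{Im}(e^{-t}\Omega_2)$, hence fixes $(J_1,J_2)$ and moves only the $\CC^*\times\CC^*$ scales, while the full $\mathrm{SL}(2,\RR)$ is three--dimensional against a $24$--dimensional base.

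The paper's argument uses exactly that one--parameter subgroup, but directly rather than as a transport device. One sets $f(\Omega)=\log\det(K_{\mathrm{Re}\,\Omega})$, where $K_\alpha\in\mathfrak{sp}(V)$ is determined by $\omega(K_\alpha v,w)=q_\alpha(v,w)$; this is finite on $\mc U_1^+$ by Lemma~\ref{lem_Qposdef} and descends to $\mc M_1(3)$. Since $q_{\lambda\alpha}=\lambda^2 q_\alpha$ one has $\det K_{\lambda\alpha}=\lambda^{12}\det K_\alpha$, so the measure--preserving map $T=\mathrm{diag}(2,1/2)$ satisfies $f\circ T=f+12\log 2$. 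The slabs $f^{-1}\bigl([kC,(k+1)C)\bigr)$ for $k\in\ZZ$, $C=12\log 2$, are then pairwise isometric under powers of $T$, disjoint, of equal positive measure, and exhaust $\mc M_1(3)$; hence the volume is infinite. In your parametrization this says that even after the $\mc U_1^+$ normalisation the ratio of the two $\CC^*$ scales remains unbounded and the flow in that direction preserves the measure---so the divergence already lives in the fibre you were planning to integrate out, not in the $\mathcal S_3$ factor you were aiming at.
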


In fact the proof will show a stronger result, that the total volume must be infinite for any non-vanishing Borel measure which is invariant under the action of the subgroup  \eqref{TeichFlow_subgroup}.

\begin{proof}
Let $V$ be a 6-dimensional symplectic vector space.
Given $\alpha\in\Lambda^3V^\vee$ we defined in \eqref{ThreeFormQ} a quadratic form $q_\alpha$ on $V$ which depends quadratically on $\alpha$, i.e. $q_{\lambda\alpha}=\lambda^2q_\alpha$.
Furthermore, if $\Omega\in\mc U(V)$ then $q_{\mathrm{Re}\Omega}$ is positive definite by Lemma~\ref{lem_Qposdef}.
To get a single number from $q_\alpha$ we first define $K_\alpha:V\to V$ by
\begin{equation}
\omega(K_\alpha v,w)=q_\alpha(v,w)
\end{equation}
and let
\begin{equation}
d_\alpha:=\det(K_\alpha)
\end{equation}
then $d_{\lambda \alpha}=\lambda^{12}d_\alpha$ and $d_{\mathrm{Re}\Omega}>0$ for $\Omega\in\mc U(V)$.
Thus we can define a function 
\begin{equation}
f:\mc M_1(3)\to\RR,\qquad f(\Omega)=\log \left|d_{\lambda \alpha}\right|.
\end{equation}

Let $T:\mc M_1(3)\to\mc M_1(3)$ be the action of $\mathrm{diag}(2,1/2)\in\mathrm{SL}(2,\RR)$, then
\begin{align}
f(T\cdot\Omega)&=\log |d_{\mathrm{Re}(T\cdot\Omega)}| \\
               &=\log |d_{2\mathrm{Re}(\Omega)}| \\
               &=f(\Omega)+C
\end{align}
where $C:=12\log 2$.
This implies that each subset 
\begin{equation}
f^{-1}([kC,(k+1)C))\subset \mc M_1(3),\qquad k\in\ZZ
\end{equation}
has the same positive volume.
Since the whole space is the disjoint union of these subsets, the claim follows.
\end{proof}

\bibliographystyle{plain}
\bibliography{geomstab}

\end{document}